\documentclass[twoside]{article}
\usepackage{arxiv}
\pdfoutput=1
\usepackage[utf8]{inputenc} 
\usepackage[T1]{fontenc}    
\usepackage{hyperref}       
\usepackage{url}            
\usepackage{booktabs}       
\usepackage{amsfonts}       
\usepackage{nicefrac}       
\usepackage{microtype}      
\usepackage{verbatim}
\usepackage{graphicx}
\usepackage{mathtools}
\usepackage{amssymb,amsmath,color}
\usepackage{url}
\usepackage{mathrsfs}
\usepackage{dsfont}
\usepackage[mathcal]{eucal}
\usepackage{empheq}
\usepackage{algorithmic}
\usepackage{algorithm}

\usepackage{symbol_definitions}

\newcommand{\BEAS}{\begin{eqnarray*}}
\newcommand{\EEAS}{\end{eqnarray*}}
\newcommand{\BEA}{\begin{eqnarray}}
\newcommand{\EEA}{\end{eqnarray}}
\newcommand{\BEQ}{\begin{equation}}
\newcommand{\EEQ}{\end{equation}}
\newcommand{\BIT}{\begin{itemize}}
\newcommand{\EIT}{\end{itemize}}
\newcommand{\BNUM}{\begin{enumerate}}
\newcommand{\ENUM}{\end{enumerate}}
\newcommand{\BA}{\begin{array}}
\newcommand{\EA}{\end{array}}

\newcommand{\argmin}{\mathop{\rm argmin}}
\newcommand{\argmax}{\mathop{\rm argmax}}

\newcommand{\ri}{\mathop{\rm ri}}

\newcommand{\BlackBox}{\rule{1.5ex}{1.5ex}}  
\newenvironment{proof}{\par\noindent{\bf Proof\ }}{\hfill\BlackBox\\[2mm]}
 
\newtheorem{theorem}{Theorem}
\newtheorem{lemma}[theorem]{Lemma} 
\newtheorem{proposition}[theorem]{Proposition}

\newtheorem{definition}[theorem]{Definition}

\def\eps{\varepsilon}
\def\one{\mathbf{1}}
\def\t{^{\top}}

\def\dfn{{\,\triangleq\,}}
\def\pr{\sP\!}
\def\lip{{L\!}}
\def\prox{{\mathsf{Prox}}}
\def\grad{{\nabla}}

\def\ind{\mathsf{1}}
\def\supp{{\sigma\!}}

\def \SPS{\textrm{SPS}}
\def \MPICS{\textrm{SMP}}
\def \EPPD{\textrm{EPPD}}
\def \EPAPD{\textrm{EPAPD}}

\newcommand{\re}[1]{(\ref{eq:#1})}

\newcommand{\ip}[2]{{ \langle{#1},{#2}\rangle}}

\jmlrheading{}{2017}{}{}{}{}{Achintya Kundu, Francis Bach and Chiranjib Bhattacharyya}
\ShortHeadings{Convex optimization over intersection of simple sets}{Achintya Kundu, Francis Bach and Chiranjib Bhattacharyya}
\firstpageno{1}

\begin{document}
	
	\title{Convex optimization over intersection of simple sets: improved convergence rate guarantees via an exact penalty approach}
	
	\author{
		\name Achintya Kundu  \email achintya@csa.iisc.ernet.in \\
		\addr Department of Computer Science \& Automation \\ Indian Institute of Science, Bangalore, India       
		\AND  
		\name Francis Bach \email francis.bach@inria.fr \\
		\addr INRIA - Sierra Project-team \\ ~{\'E}cole Normale Sup{\'e}rieure, Paris, France
		\AND 
		\name Chiranjib Bhattacharrya \email chiru@csa.iisc.ernet.in \\
		\addr Department of Computer Science \& Automation \\ Indian Institute of Science, Bangalore, India       
	}
	
	\maketitle

\begin{abstract}
We consider the problem of minimizing a convex function over the intersection of finitely many simple sets which are  easy to project onto. This is an important problem arising in various domains such as machine learning. The main difficulty lies in finding the projection of a point in the intersection of many sets. Existing approaches yield an infeasible point with an iteration-complexity of $O(1/\varepsilon^2)$ for nonsmooth problems with no guarantees on the in-feasibility. By reformulating the problem through exact penalty functions, we derive first-order algorithms which not only guarantees that the distance to the intersection is small but also improve the complexity to $O(1/\varepsilon)$ and $O(1/\sqrt{\varepsilon})$ for smooth functions. For composite and smooth problems, this is achieved through a saddle-point reformulation where the proximal operators required by the primal-dual algorithms can be computed in closed form. We illustrate the benefits of our approach on a graph transduction problem and on graph matching.
\end{abstract}

\section{Introduction}
We call a closed convex set \emph{simple} if there is an oracle available for computing Euclidean projection onto the set. In this paper we consider the problem of minimizing a convex function $f$ over a convex set $\C$ where $\C$ is given as the intersection of finitely many simple closed convex sets $\C_1,\ldots,\C_m$ ($m \ge 2$). Specifically, we focus on optimization problems of the following form:
\BEQ
\label{eq:p0} f_* ~=~ \min_{ \xx ~\in ~\X } ~\Big[~  f(\xx)  ~ + ~  \sum_{i=1}^m \ind_{\C_i}(\xx) ~\Big], 
\EEQ
where $\ind_{\C_i}$ is the indicator function for set $\C_i$ and $\X$ $(\C \subset \X)$ represents the domain of $f$.

Optimization problems of the form \re{p0} arise in many machine learning tasks such as learning over doubly stochastic matrices, matrix completion \cite{DRS_matrix_completion},  graph transduction \cite{chiru2015nips}; sparse principal component analysis can be posed as optimization over the intersection of the set of positive semidefinite (PSD) matrices with unit trace and an $\ell_1$-norm ball \cite{Aspremont_Sparse_PCA}; in learning correlation matrices, the feasible set is the intersection of the PSD cone and the set of symmetric matrices with diagonal elements equal to one \cite{Nearest_Correlation_2016}. Another area of computer science where problems of type \re{p0} occur is in convex relaxations of various combinatorial optimization problems such as correlation clustering \cite{Correlation_Clustering}, graph-matching \cite{Graph_Matching_Bach}, etc. 

Over the last few decades a large number of first-order algorithms have been proposed to solve \re{p0} efficiently assuming $\C$ to be simple \cite{Nemirovski_MLopt1, Nemirovski_MLopt2, Nesterov_smoothing_nonsmooth}. But, in many practical problems such as those mentioned above, projection onto the feasible set $\C = \cap_{i=1}^m\C_i$ is difficult to compute whereas oracles for projecting onto each of $\C_1,\ldots,\C_m$ are readily available. Note that many sets~$\C$ where Frank-Wolfe algorithms can sometimes be used~\cite{jaggi2013revisiting}, i.e., when maximizing linear functions on $\C$ is supposed to be efficient, can often be decomposed as the intersection of sets with projection oracles (a classical example being the set of doubly stochastic matrices, as done in our experiments). 

This calls for developing efficient first-order algorithms which access $\C$ only through the projection oracles of the individual sets $\C_1, \ldots,\C_m$. We mention here that such algorithms have been well-studied in the context of two specific problems: (a) the convex feasibility problem (corresponding to $f = 0$), which aims at finding a point in $\C = \cap_{i=1}^m \C_i$ \cite{Bauschke1996, Beck_error_bound} and (b) the problem of computing Euclidean projections onto $\cap_{i=1}^m \C_i$ \cite{Dykstra_Projection, Beck_dual_proximal}. Existing algorithms for (a)  and (b) ensure a feasible solution only in the asymptotic sense and in general produce only an infeasible approximate solution when terminated after a finite number of iterations. Therefore, aiming for feasible approximate solution without access to projection oracle for $\C$ seems too big a goal to achieve for problems of the form~\re{p0}. Hence, we relax the feasibility requirement and introduce the following notion of approximate solution: 
\begin{definition}
For a given $\eps >0$, we call $\xx_{\eps} \in \X$ to be an $\eps$-optimal $\eps$-feasible solution of \re{p0} if $f(\xx_\eps) - f_* \le \eps$ and $d_\C(\xx_\eps) \le \eps / \lip_f$, where $d_\C(\xx_\eps) \dfn \inf_{\xx \in \C} \| \xx - \xx_\eps\|$ and $\lip_f$ is the Lipschitz constant of $f$.
\end{definition}
Note that $f(\xx_\eps) \ge f_*$ holds if $\xx_\eps$ is feasible. Since $\xx_\eps$ is allowed to be infeasible as per above definition, $f(\xx_\eps)$ might be well below $f_*$. The bound on the distance to feasible set $d_\C(\xx_\eps) \le \eps / \lip_f$ not only characterizes that feasibility violation of $\xx_\eps$ is small but also ensures $f(\xx_\eps) - f_* \ge -\eps$. With the notion of approximate solution in place, the key question now is the following: given access to projection oracles of the $\C_i$'s how many oracle calls does a first-order method need in order to produce an $\eps$-optimal $\eps$-feasible solution of~\re{p0}. In this paper we aim to address this question. We summarize our contributions below.

\paragraph{Contributions.}
To the best of our knowledge, we are the first one to derive general complexity results for problems of the form \re{p0} where $f$ is given by a first-order oracle and the feasible set $\C = \cap_{i=1}^m \C_i$ can be accessed only through projections onto $\C_i$s. Note that our complexity estimates not only guarantee closeness of the approximate solution to the optimal objective value but also provide guarantees on the distance of such infeasible solutions from the feasible set. More precisely (see summary in Table~\ref{table:complexity}): 
\BIT
\item Utilizing a standard constraint qualification assumption on problem \re{p0}, we present in Proposition~\ref{prop:exact_penalty} an exact penalty based reformulation whose $\eps$-optimal feasible solutions are in fact the desired $\eps$-accurate $\eps$-feasible solution of \re{p0}. 

\item We show in Proposition \ref{prop:nonsmooth_convex} that an adaptation of the standard subgradient method achieves the $O(1/ {\eps^2})$ iteration complexity for obtaining an $\eps$-optimal $\eps$-feasible solution of \re{p0} where $f$ belongs to the class of general nonsmooth convex functions given by a first-order oracle. Specifically, an iteration of the proposed algorithm asks for one call to the first-order oracle of $f$ and one call each to the projection oracles of $\C_1,\ldots,\C_m$. Additionally, assuming $f$ to be strongly convex we show in Proposition \ref{prop:nonsmooth_strong} that the same subgradient based algorithm achieves the $O(1/ {\eps})$ iteration complexity for obtaining an $\eps$-optimal $\eps$-feasible solution of~\re{p0}. We mention that existing approaches \cite{Bertsekas_Incremental} with  $O(1/\eps^2)$ complexity produce only an infeasible solution without any guarantee on the distance of the infeasible solutions from the feasible set. For the strongly convex case $O(1/\eps)$ complexity was reported \cite{Beck_dual_proximal} but applicable only to a limited class of functions $f$ where gradients of Fenchel conjugate of $f$ can be computed easily. In contrast, our method relies on the availability of only subgradient of $f$.   
 
\item Through a novel saddle-point reformulation and employing existing primal-dual methods we show that the resulting approach achieves $O(1/ {\eps})$ iteration complexity for obtaining an $\eps$-optimal $\eps$-feasible solution of \re{p0} when $f$ belongs to the class of smooth convex functions given by a first-order oracle. Similar to the subgradient approach, an iteration of the proposed primal-dual approach requires one call to the first-order oracle of $f$ and one call each to the projection oracles of $\C_1,\ldots,\C_m$. Further, assuming $f$ to be strongly convex the same primal-dual approach achieves $O(1/ \sqrt{\eps})$ iteration complexity for obtaining an $\eps$-optimal $\eps$-feasible solution of \re{p0}. Moreover, for nonsmooth convex functions with specific structure, for example, when the minimization problem has a smooth convex-concave saddle-point representation, we show that an adaptation of the mirror-prox technique achieves an iteration complexity of $O({1}/{\eps})$ to produce an $\eps$-optimal $\eps$-feasible solution of \re{p0}. For the same class of functions, existing approaches \cite{Nemirovski_mirror_prox_composite} using mirror-prox technique reported $O(\frac{1}{\eps}\log\frac{1}{\eps})$ complexity.

\EIT

\begin{table}
	\caption{Complexity of the proposed first-order algorithms for obtaining an $\eps$-optimal $\eps$-feasible solution of \re{p0} under 4 different classes of functions $f$.} \label{table:complexity}
	\begin{center}
		\begin{tabular}{|l||c|c|}
			\hline
			{\bf Class of functions $f$} 	&{\bf Nonsmooth}  	&{\bf Smooth} 	\\ \hline \hline
			{\bf Convex}         	&$O(1/\eps^2)$ 		&$O(1/\eps)$	\\ \hline
			{\bf Strongly convex}  	&$O(1/\eps)$ 		&$O(1/\sqrt{\eps})$ \\ \hline
		\end{tabular}
	\end{center}
\end{table}

\paragraph{Notation.} 
Through out this paper $\|\! \cdot \!\|$  denotes the standard Euclidean norm. Let $\A \subset \RR^n $ be a nonempty closed convex set and $\xx \in \RR^n$. Euclidean projection (or simply projection) of $\xx$ onto $\A$ is given by $\pr_{\A}(\xx) = \argmin_{ \aa \in \A} \| \xx - \aa \|$. The distance of $\xx$ from $\A$ is given by $d_\A(\xx)  \dfn \min_{ \aa \in \A} \| \xx - \aa \| = \|\xx - \pr_{\A}(\xx)\|$. The support function of $\A$ is defined as ~$\supp_\A(\xx) \dfn \sup_{\aa \in \A} \ip{\xx}{\aa},~  \xx \in \RR^n$. Let $\psi:\A \to \RR$ be convex; its proximal operator  is defined as $\prox_{\gamma \psi}(\xx) \dfn  \argmin_{ \aa \in \A } \left[ \psi(\aa) +  \frac{1}{2\gamma}\| \xx -\aa \|^2\right],\,\gamma>0$. Whenever $\frac{0}{0}$ appears we will treat it to be $0$. Proofs of all Propositions \& Lemmas and details of the proposed algorithms are given in the Appendix.

\section{Problem Set-up \& Related Work}
In this paper we focus on developing efficient first-order algorithms for solving problems of the form \re{p0}. For the rest of this paper, we make the following assumptions on \re{p0}:
\BNUM
\item[{\bf A1.}] $\X, \C_1, \, \ldots,\,\C_m$ are simple closed convex sets in $\RR^n$ such that $\X$ is bounded and contains $\C \dfn \bigcap_{i=1}^m \C_i$,
\item[{\bf A2.}] $f: \X \to \RR$ is convex and Lipschitz-continuous with Lipschitz constant $\lip_f >0$,
\item[{\bf A3.}] the family $\{\C_1, \, \ldots,\,\C_m\}$ satisfies the standard constraint qualification condition \cite{Beck_error_bound}:
\BEQ \label{eq:scc} \textstyle \exists\,\bar{\xx}~ \in ~ \bigcap_{i=1}^m \ri(\C_i) ,\EEQ
where $\ri(\C_i)$ denotes the relative interior of $\C_i$. If $\C_i$  is polyhedral then $\ri(\C_i)$ in the above condition can be replaced by $\C_i$.
\ENUM
Note that we have access to oracles for computing Euclidean projections onto each of the following sets: $\X, \C_1, \, \ldots,\,\C_m$ as these sets have been assumed to be simple. Typically, the domain $\X$ is equal to one of the $\C_i$'s. Hence, the availability of projection oracles for $\C_i$'s suffices and no separate oracle is needed for projecting onto $\X$. The standard constraint qualification condition \re{scc} enables us to avoid pathological cases. It is automatically satisfied whenever the feasible set $\C$ has a nonempty interior or all the sets $\C_i$'s are defined by affine equality and inequality constraints. By virtue of assumptions [A1-A3], the set of optimal solutions of \re{p0} is nonempty as $f$ is continuous over the nonempty compact set $\C$. Our goal in this paper is to develop efficient algorithms which can produce for any given $\eps >0$ an $\eps$-optimal $\eps$-feasible solution of \re{p0} with access to only projection oracles of $\X,\C_1,\ldots,\C_m$ and a first-order oracle which returns a subgradient of $f$. Below we provide a brief survey of the existing literature.

\subsection{Related work}

Many algorithms with $O({1}/{\eps^2})$ complexity have been suggested in the stochastic setting \cite{Nedic_Random, multi_constraint_sgd_2015, Bertsekas_incremental_variational}. But these randomized approaches do not provide any insight on how to obtain an approximate solution with a deterministic guarantee on the distance to the feasible set. In \cite{Bertsekas_Incremental} an incremental subgradient approach was proposed for solving general convex optimization problems of the form \re{p0} through an exact penalty reformulation. Though their approach produces an $\eps$-optimal solution of the penalized problem in $O({1}/{\eps^2})$ iterations, such solutions need not be $\eps$-optimal $\eps$-feasible solutions of the original problem \re{p0} as they come with no guarantee on their distance to the feasible set. 

Another line of research considers problem \re{p0} with $\C_i$'s given by functional constraints: $ \C_i = \{ \xx \in \RR^n \,|\, g_i(\xx) \le 0\}$ for some convex function $g_i$. In such setting, the convergence analysis of existing algorithms \cite{Nedic_subgrad_ssp, Mahadavi_one_projection, heavily_constrained_SGD, one_projection_icml17} crucially depends on the assumption $\exists \bar{\xx} \in \RR^n$ such that $g_i(\bar{\xx}) < 0$. Hence, these methods can not be applied for abstract set constraints by taking the distance function $d_{\C_i}$ as $g_i$. Also their dependence on the existence of a strictly feasible point makes them inapplicable in the presence of affine equality constraints. Another short-coming of their approach is its sub-optimal performance when the objective function has smoothness structure. Hence, in this paper we explore alternative approaches without assuming any functional representation for the constraint sets. Using the standard constraint qualification \re{scc} for problem \re{p0} we derive in Section \ref{sec:smp} a primal-dual formulation and an improved convergence guarantee of $O( {1}/{\eps})$ under additional smoothness / structural assumptions on~$f$. In \cite{Poximal_dist} a smooth penalty based approach was proposed for minimizing convex function over intersections of convex sets; however, their approach does not provide any guarantee on the feasibility violation of the approximate solutions. In addition, their method requires the penalty constant to approach infinity, which our method does not require.

A special case of \re{p0} where $\C$ is given by the inverse image of a convex cone under affine transformation was studied in \cite{Lan2013}. Their penalty function based approach does not generalize to other settings. \cite{chiru2015nips} proposed an inexact proximal method to solve a graph transduction problem which is cast as an instance of \re{p0} with $m=2$. They substituted the projection step in the standard subgradient method with an approximate projection which is computed through an iterative algorithm. Due to the use of repeated projections onto $\C_i$'s to compute one approximate projection onto $\C$ their method can be shown to require $O({1}/{\eps^3})$ projections onto each of the $\C_i$s for producing an $\eps$-optimal $\eps$-feasible solution of \re{p0}. 

We mention that when the objective $f$ is strongly convex the fast dual proximal gradient (FDPG) method of \cite{Beck_dual_proximal} can be applied to \re{p0}; \cite{Beck_dual_proximal} showed that the primal iterates (and corresponding primal objective function values) generated by the FDPG method converge to the optimal solution (optimal primal objective value) at $O( {1}/{T})$-rate, where $T$ is the number iterations. But  every iteration of the FDPG method requires solving a subproblem for computing the gradient of the Fenchel-conjugate of $f$. This makes the FDPG method unsuitable for a general strongly convex objective $f$ where $f$ is accessed only through a first-order oracle.

We note that our problem \re{p0} can be posed in the following form for applying splitting methods such as  the alternating direction method of multipliers (ADMM) \cite{Boyd_Linear_Rate_ADMM} or proximal method of multipliers \cite{Teboulle_2014_PMM}:
\BEQ
\label{eq:admm}
\min_{\xx \in \X,\,\bZ}~~ f(\xx) +  \sum_{i=1}^m \ind_{\C_i}(\zz_i)  ~~ \mbox{s.t.} ~~\bA \xx \,= \,\bZ\, ,
\EEQ
where $\bZ \dfn (\zz_1,\ldots,\zz_m)$, $\bA$ denotes the mapping $\xx \mapsto (\xx,\ldots,\xx) \in \otimes_{i=1}^m \RR^{n}$. Note that splitting based approach requires solving a subproblem of the following form at every iteration: $ \argmin_{\xx \in \X} f(\xx)+\frac{\rho}{2}\|\bA\xx -\bZ\|^2$ for a fixed $\rho > 0$ and $\bZ$. Hence, these methods are suitable only when solving the above mentioned subproblem is easy. Therefore, in this paper we aim at developing algorithms which deals with the general case and make no assumption on availability of efficient oracles for solving such subproblems. We mention here that \re{admm} is a special case of semi-separable problem considered in \cite{Nemirovski_mirror_prox_composite}. For that they proposed a first-order algorithm with $O(\frac{1}{\eps}\log\frac{1}{\eps})$ complexity when $f$ possesses special saddle-point structure. Specifically, their algorithm proceeds in stages with each stage solving a saddle-point formulation through composite mirror-prox \cite{Nemirovski_mirror_prox_composite} technique. Our exact penalty based approach enables us to achieve improved complexity of $O(\frac{1}{\eps})$ through a similar mirror-prox based algorithm; notably our approach does not need several stages unlike that of \cite{Nemirovski_mirror_prox_composite}. Additionally, we do not assume the sets $\C_1,\ldots,\C_m$ to be bounded which is needed to apply the method of \cite{Nemirovski_mirror_prox_composite}. 
    
Finally, we mention the connection to the literature on error bounds \cite{Pang_Error_Bound}. For convex feasibility problem (the case when $f = 0$)  there is a rich history of using the distance to the individual sets $\C_1,\ldots,\C_m$ as a proxy for minimizing the distance to the intersection~\cite{Beck_error_bound}. However, we explore the use of the same in the context of optimization problems ($f \neq 0$). Notably, utilizing distance to the individual sets we construct an exact penalty based formulation whose approximate solutions have guarantees on their distance to the intersection of the sets. Note that error bound  properties (characterizations of the distance to the set of optimal solutions) in constrained convex optimization have been shown to hold only for a limited set of problems \cite{Error_Bound_2017}. Assuming certain error bound conditions there have been attempt to establish better convergence rate guarantees \cite{one_projection_icml17}. However, in this paper we deal with the general case with out assuming any error bound property for problem \re{p0}.

\section{Exact Penalty-based Reformulation}
In this section we show that standard constraint qualification \re{scc} allows us to find a suitable penalty function-based reformulation of \re{p0}. Towards that we first recall the concept of \emph{linear regularity} of a collection of convex sets:  
\begin{definition}
The collection of closed convex sets $\{\C_1, \, \ldots,\,\C_m\}$ is linearly regular if $\exists  \Upsilon > 0$ such that  
\BEQ\label{eq:reg_C} \forall \xx \in \RR^n:  ~d_\C(\xx) ~\le~ \Upsilon \,\max_{1 \le i \le m} d_{\C_i}(\xx). \EEQ
\end{definition}
A sufficient condition for $\{\C_1, \ldots,\C_m\}$ to be linearly regular is that $\C = \bigcap_{i=1}^m \C_i$ is bounded and the standard constraint qualification \re{scc} holds \cite{Bauschke1999}. Thus, for problem \re{p0} we have linear regularity of $\{\C_1, \, \ldots,\,\C_m\}$ as a consequence of the assumptions [A1-A3]. In this context we mention that \cite{Nedic_Random, multi_constraint_sgd_2015, Bertsekas_incremental_variational} assumed linear regularity property of the sets for designing stochastic algorithms for problem \re{p0}. For the rest of the paper $\Upsilon$ will denote the linear regularity constant of $\{\C_1, \, \ldots,\,\C_m\}$. Let $R,r>0$ be such that $\C = \cap_{i=1}^m\C_i$ contains a ball of radius $r$ and $\C$ is contained in a ball of radius $R$; then the ratio $R/r$ can be taken as the regularity constant $\Upsilon$ \cite{Regulairty_Infinite_sets}. Please refer to \cite{Bauschke1996} for details about linear regularity and how to estimate the corresponding constant.  In the Appendix we discuss an algorithmic strategy based on a ``doubling trick'' to deal with the case when the regularity constant $\Upsilon$ is not available.

We now discuss the availability of suitable penalty functions for $\C = \bigcap_{i=1}^m \C_i$ such that we can solve the penalty-based reformulation efficiently using existing first-order methods without requiring projection onto~$\C$; however, the method can make use of the oracles for projecting onto each of the $\C_i$s. Below we characterize a class of such penalty functions through the notion of absolute norm \cite{Abs_Norm}.

\begin{definition}
A norm $P$ on $\RR^m$ is called an absolute norm if $\forall \uu \in \RR^m$ we have $P(\uu) = P(|\uu|)$, where $|\uu|$ denotes the vector obtained by taking element-wise modulus of $\uu$. 	
\end{definition}

\begin{proposition}\label{prop:penalty_function} 
Let $P$ be an absolute norm on $\RR^m$ and $h_{\!P} : \RR^n \to \RR_+$ be defined as
\BEQ \label{eq:penalty}
h_{\!P}(\xx) ~=~P(d_{\C_1}\!(\xx),\ldots,\,d_{\C_m}\!(\xx)), ~\xx \in \RR^n.
\EEQ
Then ~$(a)$\, $h_{\!P}$ is a convex function, ~$(b)$\, $h_{\!P}(\xx) = 0$ if and only if $\xx \in \C$, ~$(c)$\, $\exists$ a regularity constant $\Upsilon_{\!P} > 0$ such that
\BEQ \label{eq:reg_h}
\forall \,\xx \in \RR^n:  ~~~ d_{\C}(\xx)  ~\le ~ \Upsilon_{\!\!P} \,h_{\!P}(\xx).
\EEQ
\end{proposition}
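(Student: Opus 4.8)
The plan is to treat the three claims in order, with part $(a)$ as the main technical step and parts $(b)$--$(c)$ following from standard facts. For $(a)$, the building blocks are the distance functions $d_{\C_i}$, each of which is convex and nonnegative as the distance to a nonempty closed convex set. Since $h_{\!P} = P\circ(d_{\C_1},\ldots,d_{\C_m})$ is the composition of the norm $P$ with a vector of convex functions, I would invoke the composition rule for convexity: if the outer function $P$ is convex and nondecreasing in each coordinate on the nonnegative orthant (which contains the range of $(d_{\C_1},\ldots,d_{\C_m})$), then the composition is convex. The only nonroutine ingredient is the monotonicity of $P$ on the nonnegative orthant, which I would derive from the absolute property as follows. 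Fix $\uu \ge 0$ and a coordinate $i$, and consider $\phi(t) = P(u_1,\ldots,t,\ldots,u_m)$ as a function of $t \in \RR$ with the remaining (nonnegative) coordinates held fixed. Because $P$ is absolute, $\phi(t) = \phi(-t)$, so $\phi$ is even; since $P$ is a norm, $\phi$ is convex. An even convex function on $\RR$ is nondecreasing on $[0,\infty)$ (for $0 \le s \le t$, write $s$ as a convex combination of $-t$ and $t$ and use $\phi(-t)=\phi(t)$), so increasing any single nonnegative coordinate cannot decrease $P$. Chaining this over coordinates yields $P(\uu) \le P(\vv)$ whenever $0 \le \uu \le \vv$ componentwise, which is exactly the monotonicity the composition rule requires.

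For $(b)$, the statement is immediate from definiteness of the norm. Since $P$ is a norm, $h_{\!P}(\xx) = P(d_{\C_1}(\xx),\ldots,d_{\C_m}(\xx)) = 0$ if and only if the vector of distances vanishes, i.e. $d_{\C_i}(\xx) = 0$ for every $i$. As each $\C_i$ is closed, $d_{\C_i}(\xx) = 0 \iff \xx \in \C_i$, so the condition is equivalent to $\xx \in \bigcap_{i=1}^m \C_i = \C$.

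For $(c)$, I would combine linear regularity with the equivalence of norms in finite dimension. By assumptions [A1--A3] the family $\{\C_1,\ldots,\C_m\}$ is linearly regular, so there is $\Upsilon > 0$ with $d_\C(\xx) \le \Upsilon \max_{1\le i \le m} d_{\C_i}(\xx) = \Upsilon\,\|(d_{\C_1}(\xx),\ldots,d_{\C_m}(\xx))\|_\infty$. Since all norms on $\RR^m$ are equivalent, there is a constant $c > 0$ such that $\|\uu\|_\infty \le c\,P(\uu)$ for all $\uu \in \RR^m$. Applying this bound to $\uu = (d_{\C_1}(\xx),\ldots,d_{\C_m}(\xx))$ gives $d_\C(\xx) \le \Upsilon c\,h_{\!P}(\xx)$, so the choice $\Upsilon_P = \Upsilon c$ establishes \re{reg_h}. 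The main obstacle is the monotonicity step in $(a)$; once it is in place, the convexity composition in $(a)$, the definiteness argument in $(b)$, and the norm-equivalence bound in $(c)$ are all routine.
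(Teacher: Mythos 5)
Your proposal is correct and follows essentially the same route as the paper: convexity of the $d_{\C_i}$ plus monotonicity of the absolute norm $P$ on the nonnegative orthant for $(a)$, definiteness of $P$ and closedness of the $\C_i$ for $(b)$, and linear regularity combined with the bound $\|\uu\|_\infty \le c\,P(\uu)$ for $(c)$ (the paper takes $\Upsilon_{\!\!P} = \Upsilon\max\{\|\uu\|_\infty : P(\uu)=1\}$, which is exactly your norm-equivalence constant). The only difference is that you prove the monotonicity of absolute norms via the even-convex-function argument, whereas the paper simply cites it as a known property.
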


With $h_{\!P}$ as defined in \re{penalty}, we consider the following penalized version of problem \re{p0}:
\begin{empheq}[box=\fbox]{align}\label{eq:p1} 
f^{\lambda}_* = \inf_{\xx \in \X}\left[f^{\lambda}(\xx) \,\equiv\, f(\xx) + \lambda\, h_{\!P}(\xx)\right], ~ \lambda > 0.
\end{empheq}
An exact penalty function of the form $\sum_{i=1}^m\gamma_i d_{\C_i}(\cdot)$, where the constants $\gamma_i$ are chosen solely based on the Lipschitz constant $\lip_f$ (without taking linear regularity of the sets into account) was proposed in \cite{Bertsekas_Incremental}. In Appendix we provide a counter example to show that choosing $\gamma_i$'s as per their prescription does not always work (in fact our example shows that Proposition~11 in \cite{Bertsekas_Incremental} does not hold in absence of the standard constraint qualification which we assume in our case). Secondly, the penalty-based reformulation suggested in \cite{Bertsekas_Incremental} does not provide any guarantee on the feasibility violation (distance from the feasible set) of the approximate solutions. In this paper, making use of the standard constraint qualification condition \re{scc} we propose the penalty-based reformulation \re{p1} which we will show to be an exact reformulation of the original problem \re{p0} with the added property that the approximate solutions of \re{p1} are also the approximate solutions of \re{p0} with the desired in-feasibility guarantee. We now show that the use of linear regularity property \re{reg_h} allows us to relate the solution set of \re{p1} to that of \re{p0} and the constant $\lambda$ can be set independent of the desired accuracy of the solution (but big enough) unlike other penalty methods \cite{Poximal_dist} where $\lambda \rightarrow \infty$ is needed.

\begin{proposition}\label{prop:exact_penalty}
Consider problem \re{p0} and the corresponding penalty-based formulation \re{p1} with penalty function $h_{\!P}$ as in \re{penalty}. Then we have:
\BNUM
\item[a.] If $\lambda \ge \Upsilon_{\!\!P} \lip_f$ then $f^{\lambda}_* = f_*$ and every optimal solution of \re{p0} is an optimal solution of \re{p1}.
\item[b.] If $\lambda >  \Upsilon_{\!\!P} \lip_f$ then every optimal solution of \re{p1} is an optimal solution of \re{p0}.
\item[c.] Let  $\lambda \ge 2 \Upsilon_{\!p} \lip_f$ and $\xx_\eps$ be an $\eps$-optimal solution of \re{p1} for a given $\eps > 0$. Then $\xx_\eps$ is an $\eps$-optimal $\eps$-feasible solution of \re{p0}, that is,  $ f(\xx_\eps) - f_*  \le \eps$ and  $d_\C(\xx_\eps) \le \frac{\eps}{\lip_f}.$
Moreover, $\pr_{\C}(\xx_\eps)$ is an $\eps$-optimal feasible solution of \re{p0}.
\ENUM
\end{proposition}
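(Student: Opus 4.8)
My plan is to route all three parts through a single pointwise lower bound on the penalized objective, valid for every $\xx \in \X$:
\BEQ\label{eq:penalty_lb}
f^{\lambda}(\xx) ~\ge~ f_* + (\lambda - \Upsilon_{\!P}\lip_f)\,h_{\!P}(\xx).
\EEQ
To establish \re{penalty_lb} I would project $\xx$ onto the feasible set, writing $\pp \dfn \pr_\C(\xx)$ so that $\|\xx - \pp\| = d_\C(\xx)$ and $f(\pp) \ge f_*$ by feasibility. Lipschitz continuity of $f$ from [A2] gives $f(\xx) \ge f(\pp) - \lip_f d_\C(\xx) \ge f_* - \lip_f d_\C(\xx)$, and the error bound \re{reg_h} supplies $d_\C(\xx) \le \Upsilon_{\!P} h_{\!P}(\xx)$; adding $\lambda\,h_{\!P}(\xx)$ to both sides and regrouping yields \re{penalty_lb}. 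The matching upper estimate $f^{\lambda}_* \le f_*$ is immediate: restricting the infimum in \re{p1} to $\xx \in \C \subset \X$ and using $h_{\!P}\equiv 0$ on $\C$ (Proposition~\ref{prop:penalty_function}(b)) gives $f^{\lambda}_* \le \inf_{\xx\in\C} f(\xx) = f_*$.

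Parts (a) and (b) then fall out of \re{penalty_lb}. When $\lambda \ge \Upsilon_{\!P}\lip_f$ the coefficient $\lambda - \Upsilon_{\!P}\lip_f$ is nonnegative and $h_{\!P}\ge 0$, so \re{penalty_lb} gives $f^{\lambda}(\xx)\ge f_*$ for all $\xx\in\X$, i.e. $f^{\lambda}_*\ge f_*$; together with the upper estimate this proves $f^{\lambda}_* = f_*$, and any minimizer $\xx^*$ of \re{p0} satisfies $f^{\lambda}(\xx^*)=f(\xx^*)=f_*=f^{\lambda}_*$, hence solves \re{p1}. For (b), if $\lambda > \Upsilon_{\!P}\lip_f$ and $\xx^*$ minimizes \re{p1}, then substituting $f^{\lambda}(\xx^*)=f_*$ into \re{penalty_lb} forces $(\lambda-\Upsilon_{\!P}\lip_f)\,h_{\!P}(\xx^*)\le 0$; as the coefficient is now strictly positive, $h_{\!P}(\xx^*)=0$, so $\xx^*\in\C$ by Proposition~\ref{prop:penalty_function}(b) and $f(\xx^*)=f^{\lambda}(\xx^*)=f_*$, making $\xx^*$ optimal for \re{p0}.

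For part (c), take $\lambda \ge 2\Upsilon_{\!P}\lip_f$ and an $\eps$-optimal $\xx_\eps$ of \re{p1}, so $f^{\lambda}(\xx_\eps)\le f^{\lambda}_*+\eps = f_*+\eps$ by (a). Feeding this into \re{penalty_lb} and using $\lambda - \Upsilon_{\!P}\lip_f \ge \Upsilon_{\!P}\lip_f$ bounds the penalty as $h_{\!P}(\xx_\eps)\le \eps/(\Upsilon_{\!P}\lip_f)$, whence the error bound \re{reg_h} gives $d_\C(\xx_\eps)\le \Upsilon_{\!P}\,h_{\!P}(\xx_\eps)\le \eps/\lip_f$. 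The objective bound $f(\xx_\eps)-f_*\le\eps$ is even easier, since $f(\xx_\eps)\le f(\xx_\eps)+\lambda\,h_{\!P}(\xx_\eps)=f^{\lambda}(\xx_\eps)\le f_*+\eps$.

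I expect the one genuinely delicate point to be the final claim that $\pr_\C(\xx_\eps)$ is an $\eps$-optimal feasible solution: feasibility is free, but the naive route $f(\pr_\C(\xx_\eps))\le f(\xx_\eps)+\lip_f\, d_\C(\xx_\eps)$ only yields a $2\eps$ gap. The fix is to spend the penalty term as a credit against the Lipschitz cost of moving to the projection: substituting $f(\xx_\eps)=f^{\lambda}(\xx_\eps)-\lambda\,h_{\!P}(\xx_\eps)$ together with $\lip_f\, d_\C(\xx_\eps)\le \lip_f\Upsilon_{\!P}\,h_{\!P}(\xx_\eps)$ gives
\BEQ
f(\pr_\C(\xx_\eps)) ~\le~ f^{\lambda}(\xx_\eps) - (\lambda-\Upsilon_{\!P}\lip_f)\,h_{\!P}(\xx_\eps) ~\le~ f^{\lambda}(\xx_\eps) ~\le~ f_* + \eps,
\EEQ
so the Lipschitz excess from projecting is exactly absorbed by the surplus penalty and the projected point is $\eps$-optimal. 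This is precisely where the factor $2$ in the hypothesis $\lambda \ge 2\Upsilon_{\!P}\lip_f$ is used.
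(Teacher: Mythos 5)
Your proof is correct and takes essentially the same route as the paper: both rest on the two ingredients $f_* \le f(\xx) + \lip_f\, d_{\C}(\xx)$ (Lipschitz continuity) and $d_{\C}(\xx) \le \Upsilon_{\!\!P}\, h_{\!P}(\xx)$ (regularity), which you merely package into the single pointwise bound $f^{\lambda}(\xx) \ge f_* + (\lambda - \Upsilon_{\!\!P}\lip_f)\,h_{\!P}(\xx)$, and your treatment of $\pr_{\C}(\xx_\eps)$ is the paper's argument verbatim. One small correction to your closing remark: that final projection step only needs $\lambda - \Upsilon_{\!\!P}\lip_f \ge 0$; the factor $2$ is actually spent earlier, in turning $h_{\!P}(\xx_\eps) \le \eps/(\lambda - \Upsilon_{\!\!P}\lip_f)$ into $d_{\C}(\xx_\eps) \le \eps/\lip_f$.
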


\section{Nonsmooth Objective Functions}

In this section we propose an adaptation of the standard subgradient method for efficiently solving nonsmooth convex optimization problems over intersections of simple convex sets. Specifically, we consider problem \re{p0} with $f$ represented by a black-box oracle of first-order, that is, the oracle returns a subgradient $f'(\xx)$ of $f$ at $\xx \in \X$. Without loss of generality we assume that the subgradients returned by the oracle are bounded by the Lipschitz constant $\lip_f$. Note that direct application of subgradient method to solve \re{p0} requires projection onto the feasible set~$\C$ which may be hard to compute even when $\C$ is given by the intersection of finitely many simple sets. Our adaptation of the  subgradient method, which we call the ``split-projection subgradient'' (\SPS) algorithm, overcomes this difficulty by requiring projections only onto each~$\C_i$. We achieve this by applying the standard subgradient algorithm to problem \re{p1} instead of \re{p0}. In order to apply subgradient method to \re{p1} we present the following Lemma: 
\begin{lemma}\label{lem:subgrad_h}
Let $h_{\!P}$ be as defined in \re{penalty}. Then $h_{\!P}$ is Lipschitz-continuous on $\RR^n$ with Lipschitz constant $P(\one)$ where $\one \in \RR^m$ is the vector of all ones. Moreover, a subgradient of $h_{\!P}$ at $\xx\in \RR^n$ is given by
\BEQ h_{\!P}'(\xx) = \sum_{i=1}^m\frac{u_i^*}{d_i}[\xx - \pr_{\C_i}(\xx)], \nonumber \EEQ
where $\uu^* := \argmax_{\uu \in \RR_+^m}\{ \sum_{i=1}^m u_i d_i \,|\, P_*(\uu)\le 1\}$, $d_i = \|\xx - \pr_{\C_i}(\xx)\|$ and $P_*$ denotes the dual norm of~$P$.
\end{lemma}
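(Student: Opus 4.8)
The plan is to view $h_{\!P}$ as the composition $h_{\!P} = P \circ D$, where I write $D(\xx) := (d_{\C_1}(\xx),\ldots,d_{\C_m}(\xx)) \in \RR_+^m$, and to exploit two standard facts: each $d_{\C_i}$ is convex and $1$-Lipschitz, and an absolute norm is \emph{monotone}, i.e.\ for $a,b \in \RR^m$ with $|a| \le |b|$ componentwise one has $P(a) \le P(b)$. The latter is the classical characterization of absolute norms \cite{Abs_Norm}, and it also passes to the dual norm $P_*$, which will be needed below. For the Lipschitz bound I would start from the reverse triangle inequality $|P(D(\xx)) - P(D(\yy))| \le P(D(\xx) - D(\yy))$. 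Since $|d_{\C_i}(\xx) - d_{\C_i}(\yy)| \le \|\xx - \yy\|$ for every $i$, the difference satisfies $|D(\xx) - D(\yy)| \le \|\xx - \yy\|\,\one$ componentwise; monotonicity together with positive homogeneity of $P$ then gives
\[ |h_{\!P}(\xx) - h_{\!P}(\yy)| ~\le~ P\big(\|\xx - \yy\|\,\one\big) ~=~ P(\one)\,\|\xx - \yy\|, \]
which is exactly the claimed Lipschitz constant $P(\one)$.

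For the subgradient I would use the dual representation of the norm evaluated at $D(\xx)$, namely $P(D(\xx)) = \max\{\langle \uu, D(\xx) \rangle : P_*(\uu) \le 1\}$, giving
\[ h_{\!P}(\xx) ~=~ \max_{P_*(\uu) \le 1} ~\sum_{i=1}^m u_i\, d_{\C_i}(\xx). \]
Because $d_{\C_i}(\xx) \ge 0$ for all $i$ and $P_*$ is itself absolute (so $P_*(\uu) = P_*(|\uu|)$), replacing any maximizer $\uu$ by $|\uu|$ neither decreases the objective nor changes the constraint; hence the maximum is attained on the nonnegative orthant, and $\uu^* = \argmax_{\uu \in \RR_+^m}\{\sum_i u_i d_i : P_*(\uu) \le 1\}$, with $d_i = d_{\C_i}(\xx)$, is a genuine maximizer whose existence follows from compactness of the feasible set.

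The subgradient formula then drops out of a direct comparison argument rather than a full subdifferential calculus. Since $\uu^*$ is feasible, $h_{\!P}(\yy) \ge \sum_i u_i^* d_{\C_i}(\yy)$ for all $\yy$, with equality at $\yy = \xx$ by definition of $\uu^*$; therefore any subgradient at $\xx$ of the convex function $\yy \mapsto \sum_i u_i^* d_{\C_i}(\yy)$ is automatically a subgradient of $h_{\!P}$ at $\xx$. As $\uu^* \ge 0$, such a subgradient is the nonnegative combination $\sum_i u_i^* s_i$ with $s_i \in \partial d_{\C_i}(\xx)$. It remains to recall the subdifferential of the distance function: when $\xx \notin \C_i$, $d_{\C_i}$ is differentiable with $\nabla d_{\C_i}(\xx) = (\xx - \pr_{\C_i}(\xx))/d_i$, while when $\xx \in \C_i$ one has $d_i = 0$, $\pr_{\C_i}(\xx) = \xx$, and $0 \in \partial d_{\C_i}(\xx)$. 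In both cases $s_i = (\xx - \pr_{\C_i}(\xx))/d_i$ is a legitimate choice under the convention $0/0 = 0$, and summing yields precisely $h_{\!P}'(\xx) = \sum_{i=1}^m \frac{u_i^*}{d_i}[\xx - \pr_{\C_i}(\xx)]$.

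I expect the only real subtleties to be the reduction of the maximization to the nonnegative orthant --- which genuinely relies on $P_*$ being absolute, and justifies writing $\uu^* \in \RR_+^m$ --- and the bookkeeping for indices $i$ with $\xx \in \C_i$, where the displayed expression reads $0/0$ and must be read as the valid subgradient $0 \in \partial d_{\C_i}(\xx)$. The monotonicity-based Lipschitz estimate, the comparison argument, and the classical gradient of the distance function are otherwise routine.
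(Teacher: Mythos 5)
Your proof is correct and follows essentially the same route as the paper's: the reverse triangle inequality plus absoluteness/monotonicity of $P$ and the $1$-Lipschitzness of each $d_{\C_i}$ for the Lipschitz constant, then the dual-norm representation restricted to $\RR_+^m$ combined with the max-of-convex-functions subgradient rule and the standard (sub)gradient of the distance function. The only (welcome) refinement is your explicit treatment of indices with $\xx \in \C_i$ via $0 \in \partial d_{\C_i}(\xx)$ and the $\frac{0}{0}=0$ convention, which the paper leaves implicit.
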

The above lemma shows that we can compute a subgradient of the penalty term $h_{\!P}$ utilizing the projection oracles of $\C_1,\ldots,\C_m$. Moreover, the Lipschtiz continuity of~$h_{\!P}$ ensures that such subgradients are bounded by the constant $P(\one)$. Also, recall from the previous section that solving \re{p1} is equivalent to \re{p0} under $\lambda > \Upsilon_{\!\!P} \lip_f$. Therefore, we can apply subgradient method to \re{p1} with $\lambda \ge 2 \Upsilon_{\!\!P} \lip_f$. This results in the \SPS~algorithm for solving \re{p0}. The key recursion in \SPS~algorithm is the following:
\BEQ
\xx^{(t+1)} := \pr_{\X}\!\left( \xx^{(t)} - \gamma_t [f'( \xx^{(t)}) + \lambda \, h_{\!P}'(\xx^{(t)})]\right), \,t\ge1.\nonumber
\EEQ
Algorithmic details are given in the supplementary material. Now, the following proposition states the convergence behavior of the proposed \SPS~algorithm:
\begin{proposition}\label{prop:nonsmooth_convex}
Consider the \SPS~algorithm applied to problem \re{p0} with $\lambda \ge 2 \Upsilon_{\!\!P} \lip_f$. Then, for a given $\eps>0$, \SPS~algorithm produces an $\eps$-optimal $\eps$-feasible solution of \re{p0} in no more than $O ( {1}/{\eps^2} )$ iterations where each iteration involves computation of a subgradient of $f$ and  projections onto each of $\X,\C_1,\ldots,\C_m$.
\end{proposition}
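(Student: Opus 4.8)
The plan is to recognize that the \SPS~algorithm is nothing but the classical projected subgradient method applied to the penalized objective $f^{\lambda} = f + \lambda\, h_{\!P}$ over the bounded domain $\X$, and then to combine the textbook $O(1/\sqrt{T})$ guarantee for that method with the exact-penalty transfer of Proposition~\ref{prop:exact_penalty}. First I would check that all the ingredients demanded by the classical analysis are present. By [A2] together with part~$(a)$ of Proposition~\ref{prop:penalty_function}, $f^{\lambda}$ is convex. By [A2] and Lemma~\ref{lem:subgrad_h}, $f^{\lambda}$ is Lipschitz with constant $G \dfn \lip_f + \lambda\, P(\one)$; in particular the vector $f'(\xx) + \lambda\, h_{\!P}'(\xx)$ appearing in the recursion is a genuine subgradient of $f^{\lambda}$ and has norm at most $G$. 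Since $\lambda$ is fixed at (at least) $2\Upsilon_{\!\!P}\lip_f$, this $G$ is a finite constant that does not depend on $\eps$. Finally, [A1] guarantees that $\X$ is bounded, so its diameter $D \dfn \sup_{\xx,\xx' \in \X}\|\xx - \xx'\|$ is finite.

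With these facts in hand, I would invoke the standard convergence bound for projected subgradient descent: for a convex $G$-Lipschitz function minimized over a convex set of diameter $D$, running $T$ steps with constant step size $\gamma = D/(G\sqrt{T})$ (or a diminishing $O(1/\sqrt{t})$ schedule) yields
\BEQ
\min_{1 \le t \le T} f^{\lambda}(\xx^{(t)}) - f^{\lambda}_* ~\le~ \frac{DG}{\sqrt{T}}. \nonumber
\EEQ
Hence choosing $T \ge (DG/\eps)^2 = O(1/\eps^2)$ produces an iterate $\xx_\eps$ with $f^{\lambda}(\xx_\eps) - f^{\lambda}_* \le \eps$, i.e. an $\eps$-optimal solution of the penalized problem \re{p1}.

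To conclude, I would feed this iterate into Proposition~\ref{prop:exact_penalty}$(c)$: because the penalty weight satisfies $\lambda \ge 2\Upsilon_{\!\!P}\lip_f$, every $\eps$-optimal solution of \re{p1} is automatically an $\eps$-optimal $\eps$-feasible solution of \re{p0}, so $f(\xx_\eps) - f_* \le \eps$ and $d_\C(\xx_\eps) \le \eps/\lip_f$. The per-iteration oracle count is read off directly from the recursion: one call to the first-order oracle to obtain $f'(\xx^{(t)})$, one projection onto each $\C_i$ to assemble $h_{\!P}'(\xx^{(t)})$ via Lemma~\ref{lem:subgrad_h}, and one projection onto $\X$ for the outer step.

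The argument is essentially bookkeeping layered on top of a classical rate, so I expect no deep obstacle; the two points that demand care are (i) confirming that the effective subgradient bound $G = \lip_f + \lambda\, P(\one)$ remains constant in $\eps$ — which hinges on $\lambda$ being a fixed multiple of $\Upsilon_{\!\!P}\lip_f$ rather than a quantity growing with accuracy, as in penalty schemes requiring $\lambda \to \infty$ — and (ii) being explicit about which iterate is returned, since the subgradient method controls the best (or running-average) iterate rather than the last one. Matching the $\eps$ in the penalized-optimality bound to the $\eps$ required by Proposition~\ref{prop:exact_penalty}$(c)$ then yields the stated $O(1/\eps^2)$ complexity.
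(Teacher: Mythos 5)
Your proposal is correct and follows essentially the same route as the paper: recognize \SPS~as the standard projected subgradient method applied to the penalized objective $f^{\lambda}$ with Lipschitz constant $\lip_f + \lambda P(\one)$ over the bounded set $\X$, invoke the classical $O(DG/\sqrt{T})$ rate, and transfer the guarantee via Proposition~\ref{prop:exact_penalty}(c). The only cosmetic difference is that the paper cites the weighted-average-iterate bound of Nedi\'c--Lee with the $\gamma_t = \eta/\sqrt{t}$ schedule and optimizes over $\eta$, whereas you state the best-iterate version, which you correctly flag as a point needing care.
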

Now, we present an improved complexity estimate for the class of strongly convex functions.

\begin{proposition}\label{prop:nonsmooth_strong}
Consider the \SPS~algorithm applied to problem \re{p0} where $f$ is strongly convex with strong convexity parameter $\mu_{\!f}>0$. Let $\lambda \ge 2 \Upsilon_{\!\!P} \lip_f$. Then, for a given $\eps>0$, \SPS~algorithm produces an $\eps$-optimal $\eps$-feasible solution of \re{p0} in no more than $O( {1}/{\eps})$ iterations where each iteration involves computation of a subgradient of $f$ and  projections onto each of $\X,\C_1,\ldots,\C_m$.
\end{proposition}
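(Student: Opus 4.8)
The plan is to reduce Proposition~\ref{prop:nonsmooth_strong} to a standard convergence result for the subgradient method on strongly convex functions, applied to the penalized problem \re{p1} rather than \re{p0}. The crucial observation is that the penalty term $h_{\!P}$ is convex (Proposition~\ref{prop:penalty_function}), so adding $\lambda h_{\!P}$ to $f$ preserves strong convexity: $f^{\lambda} = f + \lambda h_{\!P}$ is strongly convex with the \emph{same} parameter $\mu_{\!f} > 0$. First I would verify that the \SPS~recursion is exactly the standard projected subgradient method applied to $f^{\lambda}$ over the set $\X$, using the subgradient formula $f'(\xx^{(t)}) + \lambda h_{\!P}'(\xx^{(t)})$, where $h_{\!P}'$ is the subgradient from Lemma~\ref{lem:subgrad_h}. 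I would then bound the norm of the subgradients of $f^{\lambda}$: since subgradients of $f$ are bounded by $\lip_f$ and subgradients of $h_{\!P}$ are bounded by $P(\one)$ (again Lemma~\ref{lem:subgrad_h}), the subgradients of $f^{\lambda}$ are bounded by $G \dfn \lip_f + \lambda P(\one)$.

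With strong convexity in hand, the key step is to invoke the classical $O(1/T)$ convergence rate for the projected subgradient method on a strongly convex objective using the diminishing step-size $\gamma_t = \frac{1}{\mu_{\!f}\, t}$ (or an averaging variant). This yields a bound of the form $f^{\lambda}(\bar{\xx}^{(T)}) - f^{\lambda}_* \le \frac{C\, G^2}{\mu_{\!f}\, T}$ for a suitable averaged iterate $\bar{\xx}^{(T)}$ and an absolute constant $C$. Since by Proposition~\ref{prop:exact_penalty}(a) we have $f^{\lambda}_* = f_*$ whenever $\lambda \ge \Upsilon_{\!\!P}\lip_f$, this directly controls the suboptimality gap on the penalized problem. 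To reach accuracy $\eps$ in $f^{\lambda}$ it then suffices to take $T = O(G^2 / (\mu_{\!f}\,\eps)) = O(1/\eps)$ iterations.

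The final step is to translate an $\eps$-optimal solution of \re{p1} into an $\eps$-optimal $\eps$-feasible solution of \re{p0}. This is precisely where the hypothesis $\lambda \ge 2\Upsilon_{\!\!P}\lip_f$ is used: Proposition~\ref{prop:exact_penalty}(c) guarantees that any $\eps$-optimal solution $\xx_\eps$ of \re{p1} satisfies both $f(\xx_\eps) - f_* \le \eps$ and $d_\C(\xx_\eps) \le \eps/\lip_f$, which is exactly the required notion of approximate solution. Each iteration uses one first-order oracle call for $f$ and, through the formula for $h_{\!P}'$, one projection onto each $\C_i$, plus the outer projection onto $\X$, matching the stated per-iteration cost. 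I would note that this mirrors the proof of Proposition~\ref{prop:nonsmooth_convex}, the only difference being the improved rate afforded by strong convexity and the corresponding step-size schedule.

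The main obstacle I anticipate is the passage to the averaged (or suffix-averaged) iterate required to obtain the clean $O(1/T)$ rate: for strongly convex nonsmooth minimization, the last iterate of $\gamma_t = \frac{1}{\mu_{\!f}\,t}$ is known to be only $O(\log T / T)$ in the worst case, so one must either average appropriately or cite a last-iterate result, and care is needed to ensure the averaging is compatible with the strong-convexity bound and that it does not affect the feasibility guarantee. Since $h_{\!P}$ contributes only convexity and not additional strong convexity, I must be careful that the effective strong-convexity constant is $\mu_{\!f}$ and not something degraded; fortunately convexity of $h_{\!P}$ means no degradation occurs. Everything else is routine substitution into the standard strongly convex subgradient analysis.
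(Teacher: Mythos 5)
Your proposal is correct and follows essentially the same route as the paper: view \SPS~as the projected subgradient method applied to the penalized problem \re{p1}, bound the subgradients by $\lip_f + \lambda P(\one)$, invoke a standard $O(1/T)$ strongly convex subgradient rate (the paper uses the weighted-average iterate of \cite{Nedic_weight_avg} with $\gamma_t = \frac{2}{\mu_{\!f}(t+1)}$, which resolves the averaging concern you raise), and conclude via Proposition~\ref{prop:exact_penalty}(c).
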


\section{Smooth Objective Functions}

In this section we consider solving problem \re{p0} under the additional assumption that $f$ is smooth. Specifically, we assume through out this section that the gradient of $f$, denoted as $\grad f$, is Lipschitz-continuous on $\X$ with Lipschitz constant $M_{\!f}$. Recall that we have access to only a first-order oracle which returns the gradient $\grad f(\xx)$ of $f$ at $\xx \in \X$ and projection oracles for computing projections onto the simple sets $\X,\C_1,\ldots,\C_m$. If we had access to projection oracle for $\C$ then applying accelerated gradient methods we can obtain an $\eps$-optimal solution of \re{p0} in $O(1/\sqrt{\eps})$ iterations. But, in the absence of projection oracle for~$\C$, problem \re{p0} is essentially an instance of nonsmooth optimization as the nonsmooth part $\sum_{i=1}^m\ind_{\C_i}$ does not possess a tractable proximal operator. Therefore, existing first-order methods for smooth/composite convex minimization can not be applied directly to problem \re{p0}. 

One of the main contributions of the paper is  to show that we can use first-order methods
through an adaptation of the primal-dual framework of \cite{Pock_2016_ergodic}. To apply the primal-dual framework we first propose a saddle-point reformulation of \re{p1} by exploiting the structure of the nonsmooth penalty function $h_{\!P}$. Before going into the details, we introduce the following notation: $\bY \dfn (\yy_1,\ldots,\yy_m) \in \otimes_{i=1}^m \RR^n$. We have:
\begin{lemma}\label{lem:hp_dual}
Let $h_{\!P}$ be as defined in \re{penalty}. Then the following holds for all $\xx \in \RR^n$:
\BEQ \label{eq:hp_dual} h_{\!P}(\xx) = \max_{\bY \in \Y_P} \sum_{i=1}^m \left[ \xx\t\yy_i - \supp_{\C_i}(\yy_i) \right], \EEQ
where $\Y_P \dfn \{ \bY \in  \otimes_{i=1}^m \RR^n \,|\,P_*(\|\yy_1\|,\ldots,\|\yy_m\|) \le 1\}$, $P_*$ denotes the dual norm of $P$ and $\supp_{\C_i}$ denotes the support function of set $\C_i$. 
\end{lemma}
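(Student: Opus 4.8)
The plan is to establish the identity by dualizing in two stages --- first the inner Euclidean distance functions $d_{\C_i}$, then the outer absolute norm $P$ --- and then to merge the two resulting dual variables into the single variable $\bY$ by a positive rescaling.

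First I would recall the Fenchel-dual representation of the distance to a closed convex set. Since $d_{\C_i}(\xx) = \inf_{\aa}\left[\|\xx - \aa\| + \ind_{\C_i}(\aa)\right]$ is the infimal convolution of the Euclidean norm with $\ind_{\C_i}$, its conjugate is $(d_{\C_i})^\ast = \ind_{\{\|\cdot\|\le 1\}} + \supp_{\C_i}$, and Fenchel--Moreau (both summands are closed and convex) yields
\[
d_{\C_i}(\xx) = \max_{\|\yy_i\|\le 1}\left[\xx\t\yy_i - \supp_{\C_i}(\yy_i)\right]
\]
for each $i$, the maximum being attained because the objective is upper semicontinuous (support functions are lower semicontinuous) on the compact unit ball.

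Next I would dualize the outer norm. Writing $\mathbf{d} = (d_{\C_1}(\xx),\ldots,d_{\C_m}(\xx)) \in \RR_+^m$, norm--dual-norm duality gives $P(\mathbf{d}) = \max_{P_*(\uu)\le 1}\uu\t\mathbf{d}$. Here I would invoke the hypothesis that $P$ is absolute: its dual $P_*$ is then also absolute \cite{Abs_Norm} and hence monotone on $\RR_+^m$, so because $\mathbf{d}\ge 0$ the maximizer can be taken in $\RR_+^m$, giving $h_P(\xx) = \max_{\uu\ge 0,\, P_*(\uu)\le 1}\sum_{i=1}^m u_i\, d_{\C_i}(\xx)$.

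The final and most delicate step is to substitute the first display into the second and collapse the nested maxima into one maximization over $\Y_P$. Since each $u_i\ge 0$, the factor $u_i$ may be pulled inside the inner maximum, producing a single maximum over $(\uu,\yy_1,\ldots,\yy_m)$ subject to $\uu\ge 0$, $P_*(\uu)\le 1$ and $\|\yy_i\|\le 1$. I would then change variables $\yy_i\mapsto u_i\yy_i$ and use positive homogeneity of the support function, $\supp_{\C_i}(u_i\yy_i) = u_i\,\supp_{\C_i}(\yy_i)$, to turn the objective into exactly $\sum_{i=1}^m\left[\xx\t\yy_i - \supp_{\C_i}(\yy_i)\right]$ in the new variables, now constrained by $\|\yy_i\|\le u_i$. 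The crux is to show that these coupled constraints are equivalent to the single constraint $\bY\in\Y_P$: the forward direction is precisely where monotonicity of $P_*$ enters, since $\|\yy_i\|\le u_i$ for all $i$ together with $P_*(\uu)\le 1$ give $P_*(\|\yy_1\|,\ldots,\|\yy_m\|)\le P_*(\uu)\le 1$; the converse follows by taking $u_i = \|\yy_i\|$. The degenerate case $u_i = 0$ forces $\yy_i = 0$ and contributes zero on both sides, matching the paper's convention, and the resulting maximum is attained because $\Y_P$ is compact and the objective is upper semicontinuous. This yields the claimed saddle-point identity.
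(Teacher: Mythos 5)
Your proof is correct and follows essentially the same route as the paper's: the dual representation $d_{\C_i}(\xx)=\max_{\|\yy_i\|\le 1}[\xx\t\yy_i-\supp_{\C_i}(\yy_i)]$, the restriction of the norm-duality maximizer to $\uu\ge 0$ via absoluteness of $P_*$, and the rescaling $\yy_i\mapsto u_i\yy_i$ using positive homogeneity of the support functions. If anything, your version is slightly more careful than the paper's at the last step, since you keep the inequality constraint $\|\yy_i\|\le 1$ and verify explicitly (via monotonicity of $P_*$ and the choice $u_i=\|\yy_i\|$) that the coupled constraints project exactly onto $\Y_P$, rather than asserting that the inner maximum is always attained on the unit sphere.
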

Exploiting the above structure of $h_{\!P}$ we have the following saddle-point reformulation of \re{p1} for any $\lambda>0$:
\BEQ \label{eq:p2} 
\min_{\xx \in \X} \max_{\bY \in \Y^{\lambda}_P}\Big[\L(\xx,\bY) \equiv f(\xx)+\sum_{i=1}^m\xx\t\yy_i - g(\bY)\,\Big],
\EEQ
where $\Y^{\lambda}_P \dfn \{ \bY \in  \otimes_{i=1}^m \RR^n \,|\,P_*(\|\yy_1\|,\ldots,\|\yy_m\|) \le \lambda\},$
\BEQ \label{eq:gY} g(\bY) ~\dfn~ \sum_{i=1}^m\supp_{\C_i}(\yy_i) + \ind_{\Y^{\lambda}_P}(\bY), ~~\bY\in \otimes_{i=1}^m \RR^n.\EEQ
We can now connect the saddle-point formulation \re{p2} with the original problem~\re{p0}. 
\begin{lemma}\label{lem:sad_sol}
Consider the saddle-point formulation \re{p2} with $\lambda \ge 2 \Upsilon_{\!\!P}\lip_f$. Fix $\eps>0$. Let $(\xx_\eps,\bY_{\!\eps}) \in \X \times \Y^{\lambda}_P$ be an $\eps$-optimal solution of \re{p2} in the following sense:
\BEQ \label{eq:sad_eps}
\sup_{\xx \in \X,\,\bY\in\Y^{\lambda}_P} ~[~\L(\xx_\eps,\bY ) - \L(\xx,\bY_{\!\eps})~] ~\le~ \eps.
\EEQ
Then $\xx_\eps$ is an $\eps$-optimal $\eps$-feasible solution of \re{p0}.
\end{lemma}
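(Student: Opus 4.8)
The plan is to reduce this to the exact-penalty result already in hand. Concretely, I would show that the saddle-point $\eps$-optimality condition \re{sad_eps} forces $\xx_\eps$ to be an $\eps$-optimal solution of the penalized problem \re{p1}, i.e.\ $f^{\lambda}(\xx_\eps) - f^{\lambda}_* \le \eps$, and then invoke Proposition~\ref{prop:exact_penalty}(c), which applies precisely because $\lambda \ge 2\Upsilon_{\!\!P}\lip_f$, to conclude that $\xx_\eps$ is $\eps$-optimal and $\eps$-feasible for \re{p0}. So the whole task is to convert the two-sided saddle gap in \re{sad_eps} into a one-sided primal gap for $f^{\lambda}$.

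First I would identify the inner maximum of $\L$ with the penalized objective. For fixed $\xx \in \X$, since $g(\bY) = \sum_{i=1}^m \supp_{\C_i}(\yy_i)$ on $\Y^{\lambda}_P$, we have $\max_{\bY \in \Y^{\lambda}_P} \L(\xx,\bY) = f(\xx) + \max_{\bY \in \Y^{\lambda}_P} \sum_{i=1}^m [\xx\t\yy_i - \supp_{\C_i}(\yy_i)]$. The key computation is that this inner maximum equals $\lambda\, h_{\!P}(\xx)$. Starting from the representation \re{hp_dual} over $\Y_P$, I would substitute $\bY = \lambda \bY'$, use that $\Y^{\lambda}_P = \lambda\,\Y_P$ (the dual-norm constraint scales linearly), and use positive homogeneity of each support function, $\supp_{\C_i}(\lambda \yy'_i) = \lambda\,\supp_{\C_i}(\yy'_i)$, to pull the factor $\lambda$ outside. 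This yields $\max_{\bY \in \Y^{\lambda}_P}\L(\xx,\bY) = f(\xx) + \lambda\, h_{\!P}(\xx) = f^{\lambda}(\xx)$, so the primal function of the saddle point is exactly $f^{\lambda}$ and hence $\inf_{\xx \in \X}\sup_{\bY \in \Y^{\lambda}_P}\L(\xx,\bY) = f^{\lambda}_*$.

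Next I would split the saddle gap. Because the two terms in \re{sad_eps} involve the independent variables $\bY$ and $\xx$, the supremum over the pair decouples as $\sup_{\bY \in \Y^{\lambda}_P}\L(\xx_\eps,\bY) - \inf_{\xx \in \X}\L(\xx,\bY_{\!\eps}) \le \eps$, and by the previous step the first term is exactly $f^{\lambda}(\xx_\eps)$. For the second term I would invoke weak duality: since $\L(\xx,\bY_{\!\eps}) \le \sup_{\bY \in \Y^{\lambda}_P}\L(\xx,\bY)$ holds pointwise in $\xx$, taking the infimum over $\xx \in \X$ gives $\inf_{\xx}\L(\xx,\bY_{\!\eps}) \le f^{\lambda}_*$. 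Substituting both facts into the split inequality yields $f^{\lambda}(\xx_\eps) - f^{\lambda}_* \le f^{\lambda}(\xx_\eps) - \inf_{\xx}\L(\xx,\bY_{\!\eps}) \le \eps$, so $\xx_\eps$ is $\eps$-optimal for \re{p1}. Proposition~\ref{prop:exact_penalty}(c) then delivers $f(\xx_\eps) - f_* \le \eps$ and $d_\C(\xx_\eps) \le \eps/\lip_f$, completing the proof.

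The only step requiring genuine care is the identification $\max_{\bY \in \Y^{\lambda}_P}\L(\xx,\bY) = f^{\lambda}(\xx)$, specifically verifying the scaling $\Y^{\lambda}_P = \lambda\,\Y_P$ together with the homogeneity cancellation in the support functions; everything downstream is a routine manipulation of suprema and infima plus a one-line appeal to weak duality. I expect no topological obstacle here, since the inner maximum is attained on the compact set $\Y^{\lambda}_P$ by Lemma~\ref{lem:hp_dual}.
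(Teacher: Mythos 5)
Your proposal is correct and follows essentially the same route as the paper: identify $\max_{\bY\in\Y^{\lambda}_P}\L(\xx,\bY)=f^{\lambda}(\xx)$, split the saddle gap into $\sup_{\bY}\L(\xx_\eps,\bY)-\inf_{\xx}\L(\xx,\bY_{\!\eps})\le\eps$, bound the dual term by $f^{\lambda}_*$, and invoke Proposition~\ref{prop:exact_penalty}(c). The only (harmless) deviations are that you spell out the scaling/homogeneity argument behind $\max_{\bY\in\Y^{\lambda}_P}\L(\xx,\bY)=f^{\lambda}(\xx)$, which the paper asserts directly from \re{hp_dual}, and that you obtain $\inf_{\xx}\L(\xx,\bY_{\!\eps})\le f^{\lambda}_*$ by plain weak duality rather than via the paper's appeal to Sion's minimax theorem, a mild simplification since only that one-sided inequality is needed.
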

In order to solve \re{p2} efficiently, the following lemma  shows that the proximal operator of the nonsmooth convex function $g$ can be evaluated in closed form through projections onto the sets $\C_1,\ldots,\C_m$.
\begin{lemma}\label{lem:prox_g}
Let $g$ be  defined in \re{gY}. Then for any $\gamma > 0$ and $\bY \in \otimes_{i=1}^m \RR^n$ the proximal operator of $g$ is given by
$\prox_{\gamma g}(\bY) = (r_1\hat{\yy}_1/\|\hat{\yy}_1\|,\ldots,r_m\hat{\yy}_m/\|\hat{\yy}_m\| )$, where $\hat{\yy}_i \dfn \yy_i - \gamma \pr_{\C_i}(\gamma^{-1}\yy_i)$ and $(r_1,\ldots,r_m)$ is the projection of $(\|\hat{\yy}_1\|,\ldots,\|\hat{\yy}_m\|)$ onto $\{ \uu \in \RR^m \,|\, P_*(\uu) \le \lambda \}$.
\end{lemma}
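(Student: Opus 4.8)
The plan is to compute the proximal operator directly from its definition
$$\prox_{\gamma g}(\bY) = \argmin_{\bZ}\Big[\sum_{i=1}^m\supp_{\C_i}(\zz_i) + \tfrac{1}{2\gamma}\sum_{i=1}^m\|\yy_i - \zz_i\|^2\Big]\ \text{s.t.}\ P_*(\|\zz_1\|,\ldots,\|\zz_m\|)\le\lambda,$$
by splitting the minimization into an inner step over the \emph{directions} of the blocks $\zz_i$ with their norms held fixed, followed by an outer step over the vector of norms subject to the coupling constraint $\ind_{\Y^{\lambda}_P}$. The pay-off of this two-level split is that the inner step decouples across $i$ and, thanks to positive homogeneity of the support functions, has a direction that does not depend on the chosen norm.

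\emph{Step 1 (single block as a Moreau residual).} First I would record that $\hat{\yy}_i = \prox_{\gamma\supp_{\C_i}}(\yy_i)$. Since $\supp_{\C_i}$ is the Fenchel conjugate of $\ind_{\C_i}$ and $\C_i$ is closed convex, Moreau decomposition gives $\prox_{\gamma\supp_{\C_i}}(\yy_i) + \gamma\,\pr_{\C_i}(\gamma^{-1}\yy_i) = \yy_i$ (using that the prox of an indicator is the projection, independent of the step size), which is exactly $\hat{\yy}_i = \yy_i - \gamma\,\pr_{\C_i}(\gamma^{-1}\yy_i)$. \emph{Step 2 (direction is independent of the norm).} Writing $\zz_i = \rho_i\ww$ with $\rho_i = \|\zz_i\|\ge0$ and $\|\ww\|=1$, and using $\supp_{\C_i}(\rho_i\ww)=\rho_i\supp_{\C_i}(\ww)$, the $i$-th summand becomes $\tfrac{1}{2\gamma}\|\yy_i\|^2 + \rho_i\big[\supp_{\C_i}(\ww) - \tfrac1\gamma\langle\yy_i,\ww\rangle\big] + \tfrac{\rho_i^2}{2\gamma}$. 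Because $\rho_i\ge 0$, the minimizing unit direction $\ww_i^\star = \argmin_{\|\ww\|=1}[\supp_{\C_i}(\ww) - \tfrac1\gamma\langle\yy_i,\ww\rangle]$ is independent of $\rho_i$; comparing with the unconstrained single-block minimization of Step 1 shows $\ww_i^\star = \hat{\yy}_i/\|\hat{\yy}_i\|$ and that the minimal bracket value equals $-\|\hat{\yy}_i\|/\gamma$.

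\emph{Step 3 (reduction to a projection).} Substituting the optimal directions, the $i$-th summand becomes $\tfrac{1}{2\gamma}\|\yy_i\|^2 - \tfrac{\rho_i}\gamma\|\hat{\yy}_i\| + \tfrac{\rho_i^2}{2\gamma}$, and completing the square in $\rho_i$ rewrites it as $\tfrac{1}{2\gamma}(\rho_i-\|\hat{\yy}_i\|)^2$ plus a term constant in $\rho$. Hence the outer problem is $\min\{\,\|\rho - (\|\hat{\yy}_1\|,\ldots,\|\hat{\yy}_m\|)\|^2 : P_*(\rho)\le\lambda,\ \rho\ge0\,\}$, i.e. the Euclidean projection of $(\|\hat{\yy}_1\|,\ldots,\|\hat{\yy}_m\|)$ onto the $P_*$-ball intersected with $\RR_+^m$. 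Since the dual of an absolute norm is again absolute (hence monotone on the nonnegative orthant) and the point being projected is nonnegative, the projection already lands in $\RR_+^m$, so the orthant constraint is vacuous; this yields the vector $(r_1,\ldots,r_m)$ of the statement, and the optimal blocks are $\zz_i = r_i\ww_i^\star = r_i\hat{\yy}_i/\|\hat{\yy}_i\|$, exactly as claimed.

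The main obstacle will be the clean treatment of degenerate blocks: when $\hat{\yy}_i = 0$ the direction $\ww_i^\star$ is ill-defined, but then the point being projected has a zero $i$-th coordinate, and monotonicity of $P_*$ on $\RR_+^m$ forces the projection's $i$-th coordinate to remain $0$, so $r_i=0$ and the product $r_i\hat{\yy}_i/\|\hat{\yy}_i\|$ is $0$ under the $\tfrac00=0$ convention adopted in the paper. Apart from this, the only points requiring care are the legitimacy of the norm--direction reparametrization (routine, since each block norm is a free variable) and the transfer of the nonnegativity constraint from the blocks onto the projected norm vector, which is what lets us drop $\rho\ge0$ and recover precisely the projection described in the statement.
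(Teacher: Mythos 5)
Your proof is correct, and it reaches the paper's formula by a genuinely different (and in places cleaner) route. The paper also reduces the problem to projecting $(\|\hat{\yy}_1\|,\ldots,\|\hat{\yy}_m\|)$ onto the $P_*$-ball, but it gets there differently: it splits the constraint by introducing radial bounds $\|\yy_i'\|\le u_i$ with $P_*(\uu)\le\lambda$, and then solves each norm-constrained subproblem by writing $\supp_{\C_i}$ as a maximum over $\xx_i\in\C_i$, swapping min and max, solving the inner ball-constrained quadratic in closed form, and finally showing the optimal $\xx_i$ is $\pr_{\C_i}(\gamma^{-1}\yy_i)$. You instead (i) identify $\hat{\yy}_i=\prox_{\gamma\supp_{\C_i}}(\yy_i)$ in one line via the Moreau decomposition, and (ii) split each block into a nonnegative norm $\rho_i$ times a unit direction, using positive homogeneity of $\supp_{\C_i}$ to see that the optimal direction is independent of $\rho_i$ and coincides with $\hat{\yy}_i/\|\hat{\yy}_i\|$. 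This buys you a proof with no minimax swap and makes transparent why the unconstrained per-block prox direction survives the coupling constraint; the paper's route, in exchange, exhibits explicitly the primal witness $\xx_i=\pr_{\C_i}(\gamma^{-1}\yy_i)$ that is reused elsewhere (e.g., in the expression for $g'$ in the \MPICS~algorithm). One small point to tighten: your completed square $\tfrac{1}{2\gamma}(\rho_i-\|\hat{\yy}_i\|)^2$ relies on the minimal bracket value being exactly $-\|\hat{\yy}_i\|/\gamma$, which holds when $\hat{\yy}_i\neq 0$ but not when $\hat{\yy}_i=0$ (there the bracket's minimum is only $\ge 0$, possibly $+\infty$). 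You do flag this; the fix is exactly as you say, since the $i$-th summand is then still minimized at $\rho_i=0$, the feasible set of norm vectors is downward closed on $\RR^m_+$ by monotonicity of the absolute norm $P_*$, and the projection formula also returns $r_i=0$, so the two answers agree. With that caveat spelled out, your argument is complete and matches the statement (including your correct use of the radius $\lambda$, where the paper's own proof slips into writing $P_*(\uu)\le 1$).
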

With the proximal operator of $g$ being computable and $f$ being smooth, we define a primal-dual iteration of the following form: 
\BEA
&\mbox{Iteration:}~(\xx^{+},\bY^{+}) = \mathcal{PD}_{\tau,\gamma}(\xx,\bY,\tilde{\xx},\tilde{\bY}) \nonumber\\
&\left\{
\begin{array}{l}
\xx^{+} := \pr_{\X}\!\left( \xx- \tau \left[\grad f(\xx) + \sum_{i=1}^m \tilde{\yy}_i\right]\right), \\ 
\bY^{+} := \prox_{\gamma g}\!\left(\bY+\gamma \bA\tilde{\xx}\right), 
\end{array}\label{eq:pd_iter}
\right.
\EEA
where $\bA$ denotes the map $\xx \mapsto (\xx , \ldots,\xx) \in \otimes_{i=1}^m\RR^n$. With this we can now apply primal-dual algorithms of \cite{Pock_2016_ergodic} to problem \re{p2} with primal-dual iteration defined by \re{pd_iter}. Since $\X$ and $\Y^{\lambda}_P$ are compact sets, we can obtain an $\eps$-optimal solution of \re{p2} in the sense of \re{sad_eps} by applying $O(1/\eps)$ iterations of the non-linear primal-dual algorithm of \cite{Pock_2016_ergodic}. Moreover, when $f$ is smooth as well as strongly convex we can apply the accelerated primal-dual algorithm of \cite{Pock_2016_ergodic} which needs only $O(1/\sqrt{\eps})$ iterations of the form \re{pd_iter}. Note that Lemma~\ref{lem:sad_sol} guarantees that such solutions are enough to output an $\eps$-optimal $\eps$-feasible solution of \re{p0}. Therefore, complexity of obtaining an $\eps$-optimal $\eps$-feasible solution of \re{p0} is $O(1/\sqrt{\eps})$ when $f$ is smooth and $O(1/\sqrt{\eps})$ for smooth and strongly convex $f$. Thus, utilizing existing primal-dual machinery to a saddle-point reformulation of the exact penalty based equivalent problem \re{p1}, we achieve better complexity for problems of the form \re{p0} under smoothness assumption on~$f$. We call this approach exact penalty primal-dual (EPPD) method.

\section{Nonsmooth Objective Functions with Structure} \label{sec:smp}
In many machine learning problems such as kernel learning \cite{Bach_MKL}, learning optimal embedding for graph transduction~\cite{chiru2015nips}, etc., the objective function $f$ is defined as the optimal value of a maximization problem. In most of these cases, in spite of $f$ being non-smooth, the problem of minimizing $f$ can be cast as a smooth saddle-point problem. It is well-known that by exploiting such structure in the problem, first-order algorithms with improved convergence rate of $O(1/\eps)$ can be obtained even for non-smooth problems \cite{Nemirovski_MLopt2}. Hence, in the context of problem \re{p0} we would like to address the following question: by exploiting structure in $f$ is it possible to design first-order algorithms with $O(1/\eps)$ complexity for \re{p0}? For this we make the following additional assumptions on problem \re{p0}. Through out this section we will assume that the objective function $f$ possesses the following structure:
\BEQ f(\xx) ~= ~  \max_{ \zz ~\in ~\Z} ~F(\xx,\zz), ~\xx \in \X, \label{eq:spf} \EEQ
where $\Z$ is a simple compact convex set and $F: \X \times \Z \to \RR$ is a convex-concave function 
with Lipschitz continuous gradient. Also, we assume availability of a first-order oracle for computing the gradient of $F$ at any $(\xx,\zz)\in \X \times \Z$. 

Recall that only projections onto $\X,\C_1,\ldots,\C_m$ are available and algorithms can not ask for projections onto~$\C$. This makes the existing mirror-prox algorithm \cite{Nemirovski_MLopt2} unsuitable for problem \re{p0} even when $f$ has the above structure. We overcome this difficulty by considering the penalty based formulation \re{p1} where $\lambda \ge 2 \Upsilon_{\!\!P}\lip_f$, $f$ as in \re{spf} and $h_P$ given by \re{hp_dual}. This results in the following saddle-point formulation like~\re{p2}:
\BEQ \label{eq:p3} 
\min_{\xx \in \X}\, \max_{\zz\in\Z,\,\bY \in \Y^{\lambda}_P}~ ~[\,F(\xx,\zz)+\sum_{i=1}^m\xx\t{\yy_i} - g(\bY)\,].
\EEQ
We see that the objective above has a nonsmooth term $g$ and the remaining part has Lipschitz continuous gradient. Also, as given in Lemma~\ref{lem:prox_g}, the proximal operator of $g$ can be computed through projections onto $\C_i$'s. Hence, the mirror-prox-``a'' (MPa) algorithm \cite{Nemirovski_MLopt2} can be applied to problem \re{p3}. The resulting approach we call split-mirror prox (\MPICS), with the following   complexity estimate:
\begin{proposition}\label{prop:MPICS}
Given $\eps > 0$, \MPICS~algorithm requires no more than $O( {1}/{\eps})$ calls to the first order oracle of $F$ and $O( {1}/{\eps})$ projections onto each of $\X,\Z,\C_1,\ldots,\C_m$ to obtain an $\eps$-optimal $\eps$-feasible solution of \re{p0} where $f$ is of the form \re{spf}.
\end{proposition}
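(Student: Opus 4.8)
The plan is to realise \re{p3} as a smooth convex--concave saddle-point problem carrying a single prox-friendly nonsmooth term, to run the mirror-prox-``a'' (MPa) scheme of \cite{Nemirovski_MLopt2} on it, and then to convert the resulting saddle-point gap guarantee back into $\eps$-optimality of the penalized problem \re{p1}, from which Proposition~\ref{prop:exact_penalty} yields the claim for \re{p0}. First I would justify \re{p3} as the correct reformulation of \re{p1}: substituting $f(\xx)=\max_{\zz\in\Z}F(\xx,\zz)$ from \re{spf}, and using positive homogeneity of $\supp_{\C_i}$ together with the rescaling $\bY\mapsto\lambda\bY$ to turn Lemma~\ref{lem:hp_dual} into $\lambda h_{\!P}(\xx)=\max_{\bY\in\Y^{\lambda}_P}\sum_{i=1}^m[\xx\t\yy_i-\supp_{\C_i}(\yy_i)]$, the penalized objective $f(\xx)+\lambda h_{\!P}(\xx)$ becomes a sum of two maximizations over the independent blocks $\zz$ and $\bY$, which merge into the single saddle objective of \re{p3}. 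I would then split that objective as $\Phi(\xx,\zz,\bY)-g(\bY)$ with smooth part $\Phi(\xx,\zz,\bY)\dfn F(\xx,\zz)+\sum_{i=1}^m\xx\t\yy_i$, treating $\xx\in\X$ as the primal block and $(\zz,\bY)\in\Z\times\Y^{\lambda}_P$ as the dual block.

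Next I would verify the MPa hypotheses. The three domains are compact and convex: $\X$ and $\Z$ by assumption, while $\Y^{\lambda}_P$ is a ball of the norm $\bY\mapsto P_*(\|\yy_1\|,\ldots,\|\yy_m\|)$, hence bounded. The map $\Phi$ is convex in $\xx$ and concave in $(\zz,\bY)$, and $\grad\Phi$ is Lipschitz on this compact domain, with a constant governed by $M_{\!f}$ and the operator norm $\|\bA\|=\sqrt{m}$ of the coupling $\xx\mapsto(\xx,\ldots,\xx)$. Crucially, the only nonsmooth piece is $g$, which admits the closed-form proximal operator of Lemma~\ref{lem:prox_g} at a cost of one projection onto each $\C_i$ plus one projection onto the ball $\{P_*(\uu)\le\lambda\}$; this is precisely what lets MPa avoid any projection onto $\C$.

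Invoking the $O(L/T)$ gap bound of MPa for the averaged (ergodic) iterate of a smooth convex--concave saddle problem with a prox-friendly term, and requiring the bound to be at most $\eps$, gives $T=O(1/\eps)$. Since each iteration uses a constant number of gradient evaluations of $F$ and a constant number of prox/projection evaluations, the $O(1/\eps)$ counts for first-order oracle calls to $F$ and for projections onto each of $\X,\Z,\C_1,\ldots,\C_m$ follow. For the translation step, the output $(\hat{\xx},\hat{\zz},\hat{\bY})$ satisfies a saddle-point gap of at most $\eps$; taking the supremum over $(\zz,\bY)$ gives $\sup_{\zz,\bY}[\Phi(\hat{\xx},\zz,\bY)-g(\bY)]=f(\hat{\xx})+\lambda h_{\!P}(\hat{\xx})=f^{\lambda}(\hat{\xx})$ by the two max-representations, while $\inf_{\xx}[\Phi(\xx,\hat{\zz},\hat{\bY})-g(\hat{\bY})]$ lower-bounds $f^{\lambda}_*$ through minimax; hence $f^{\lambda}(\hat{\xx})-f^{\lambda}_*\le\eps$, so $\hat{\xx}$ is $\eps$-optimal for \re{p1}, and Proposition~\ref{prop:exact_penalty}(c) (applicable since $\lambda\ge2\Upsilon_{\!\!P}\lip_f$) makes $\hat{\xx}$ an $\eps$-optimal $\eps$-feasible solution of \re{p0}.

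I expect the main obstacle to be the careful handling of possibly unbounded $\C_i$, in which case $g$ is extended-real-valued (its $\supp_{\C_i}$ terms can take the value $+\infty$). I would need to argue that the composite mirror-prox analysis still applies---its iterates remain where $g$ is finite by virtue of the prox formula of Lemma~\ref{lem:prox_g}, and the gap function stays meaningful because the dual domain $\Y^{\lambda}_P$ is compact---and that the gap-to-primal conversion above remains legitimate despite this infinite-valuedness. This is exactly the point where our penalty-into-support-function device improves on \cite{Nemirovski_mirror_prox_composite}, which must assume the $\C_i$ bounded. The remaining Lipschitz-constant and diameter bookkeeping is routine.
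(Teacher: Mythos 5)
Your proposal follows essentially the same route as the paper's own (much terser) proof: cast \re{p3} as MPa applied to the saddle-point version of \re{p1}, invoke the $O(1/\eps)$ mirror-prox gap bound, convert the gap to $\eps$-optimality of \re{p1} (the same argument as Lemma~\ref{lem:sad_sol}), and finish with Proposition~\ref{prop:exact_penalty}. Your added detail on verifying the MPa hypotheses and on the extended-real-valued $g$ for unbounded $\C_i$ is a sound elaboration rather than a different approach.
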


\def\svm{{\omega_{C}(\bK,\by_S)}}
\def\cK{\mathcal{K}}
\def\cS{\mathcal{ S}}
\def\by{\textbf y}
\section{Experimental Results}
\label{sec:expt}
In this section we illustrate the benefits of the proposed algorithms on two problems: a graph transduction problem where the objective function is non-smooth but can be cast in the form \re{spf} and on a graph matching problem where the objective is a smooth function with Lipschitz continuous gradient. We performed all the experiments on a CPU with Intel Core i7 processor and 8GB memory. In the implementation of the proposed methods we choose the norm $P$ to be the standard $\ell_1$-norm.

\subsection{Learning orthonormal embedding for graph-transduction}
Consider a simple graph $G=(V,E)$, with vertex set $V=\{1,\ldots,N\}$ and edge set $E \subset V \times V$. If $S \subseteq V$ is labelled with binary values denoted by $\by_S \in \{-1,1\}^{|S|}$~the problem of graph transduction can be posed as learning the labels of the remaining vertices. Recently the following problem was posed in \cite{chiru2015nips} for learning the optimal orthonormal embedding of the graph for solving the problem of graph transduction with very encouraging results.
 \BEQ \label{eq:spore} \min_{\bK \in \cK(G)} \svm \,+\, \beta\, \lambda_{max}(\bK), \EEQ 
 where 
\BEA
& \svm=\max_{\alphab\in \A} \sum\limits_{i\in S}\alpha_i - \frac{1}{2}\sum\limits_{i,j\in S}\alpha_i\alpha_jy_iy_jK_{ij},\nonumber \\
&\A = \{ \alphab \in \RR^N \,|\, 0 \le \alpha_i \le C~\forall i\in S,\; \alpha_j =0~\forall j\notin S \}.\nonumber
\EEA
$\bK$~is a positive semidefinite (PSD) kernel matrix arising due to an orthonormal embedding characterized by the following set
$${\bf \mathcal{K}(G)}:=\big\{\bK\in \SS^N_+ \,|\, K_{ii}=1\, \forall i, K_{ij}=0\, \forall (i,j)\notin E\big\},$$
where $\SS^N_+$ denotes the class of symmetric PSD matrices in $\RR^{N\times N}$. The set $\cK(G)$ ~is an elliptope lying in the intersection of PSD cone with affine constraints. The objective function consists of two nonsmooth functions, $\svm$ and ~$\lambda_{max}(\bK)$, the largest eigenvalue of $\bK$ where $\beta > 0$ is user defined. In \cite{chiru2015nips} an inexact infeasible proximal method (IIPM) was proposed which do not provide any feasibility guarantee on the approximate solutions. To illustrate the effect of feasibility we compare the proposed \MPICS~method with IIPM on solving \re{spore}. 
\begin{table}
\centering
\caption{Comparison of classification accuracy (mean $\pm$ standard deviation) on MNIST digit recognition dataset.}
\setlength{\tabcolsep}{10pt}
\vspace{5pt}
\begin{tabular}{|c||c|c|}
\hline
{Dataset} & { \MPICS } & { IIPM }  \\   \hline \hline
1 vs 2  & {\bf 96.9} $\pm$ 0.4 &   96.2 $\pm$ 1.2 \\ 		
1 vs 7  & {\bf 97.6} $\pm$ 0.5 &  95.0  $\pm$ 3.0 \\ 
3 vs 8  & {\bf 89.7} $\pm$ 1.3 &  86.8 $\pm$ 2.7 \\ 
4 vs 9  & {\bf 83.0} $\pm$ 1.9 &  77.5 $\pm$ 2.1 \\ 	
6 vs 8  & {\bf 97.6} $\pm$ 0.3 &  93.8 $\pm$ 2.0 \\ \hline
\end{tabular}  \label{table:accuracy_table}
\end{table}

\begin{table}
\centering
\caption{Comparison of objective function value on MNIST digit recognition dataset. }
\setlength{\tabcolsep}{10pt}
\vspace{10pt}
\begin{tabular}{|c||c|c||c|c|} 
\hline
\multicolumn{1}{|c||}{ Dataset} &
\multicolumn{2}{c||}{  Infeasible	
 } &
\multicolumn{2}{c|}{  Feasible }   \\   \cline{2-5}
\multicolumn{1}{|c||}{  } &
\multicolumn{1}{c|}{ \MPICS } &
\multicolumn{1}{c||}{ IIPM } &
\multicolumn{1}{c|}{ \MPICS } &
\multicolumn{1}{c|}{ IIPM }   \\   \hline \hline
1 vs 2  &  5.77  & 3.97  &  {\bf 5.77}  &  5.80  \\ 
1 vs 7  &  5.31  & 3.30  &  {\bf 5.32}  &  5.33  \\ 
3 vs 8  &  6.46  & 4.33  &  {\bf 6.46}  &  6.54  \\ 
4 vs 9  &  6.13  & 4.19  &  {\bf 6.14}  &  6.18  \\ 
6 vs 8  &  6.35  & 4.38  &  {\bf 6.35}  &  6.38  \\  
 \hline
\end{tabular}  \label{table:obj_val_table}
\end{table}

\begin{figure}[h]   \centering
\includegraphics[scale =.5]{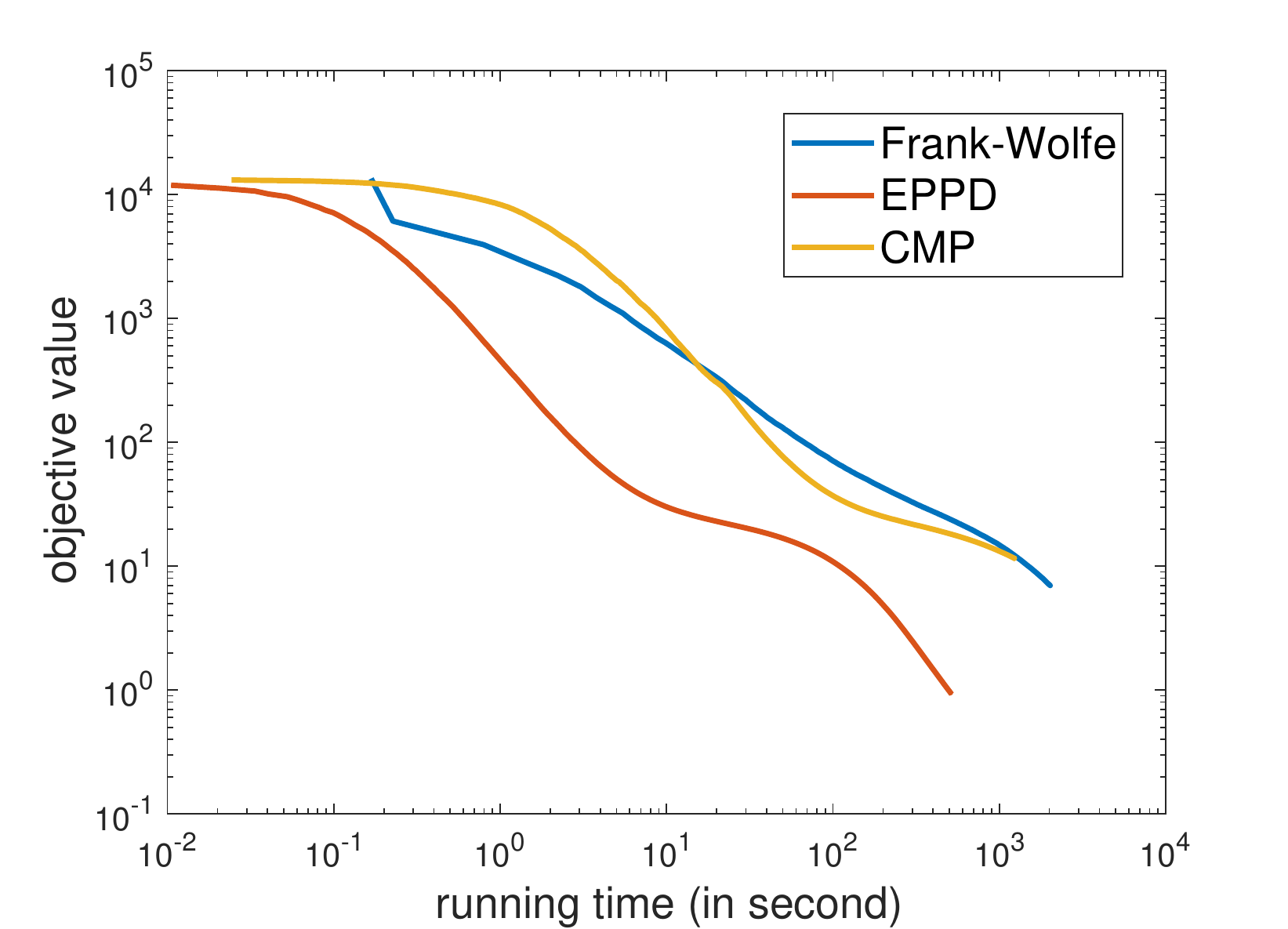} 
\caption{Convergence plot of Frank-Wolfe, Exact Penalty Primal-Dual (EPPD) and Composite Mirror-Prox (CMP) method on the Graph Matching problem. \label{fig:convergence}
}
\end{figure}

We experimented on a subset of the MNIST dataset~\cite{MNIST} where corresponding to each pair of digit classes we constructed a graph with $n=1000$ nodes as follows: (a) first randomly select 500 samples from each digit; (b) for each pair of samples put an edge in the graph if the cosine distance between samples is less than a threshold value (we set 0.4 as the threshold). For IIPM the number of inner-iteration (S) to compute approximate projection was set to 5. The regularization parameters $\beta$ and $C$ were selected through 5-fold cross-validation. Table~\ref{table:accuracy_table} summarizes the results, averaged over 5 random training/test partitioning. Labels for 10 percent of the nodes were used for training. Entries in the table represent classification accuracy (mean $\pm$ standard deviation) which we calculate as the percentage of un-labelled nodes classified correctly. To compare the effect of in-feasibility of the iterates generated by the two methods we report in Table~\ref{table:obj_val_table} the objective function value reached for one particular training/test partitioning of the data. Under the infeasible column we present the objective value at the infeasible solutions returned by \MPICS / IIPM whereas in the feasible case we project the output from both the algorithms onto the feasible set before computing the objective value. We observe that IIPM misleadingly reports smaller objective value; this is result of the iterates being far from the feasible set. As the new \MPICS~algorithm ensures that iterates are not far from the feasible set; also, objective function values do not change much even after projecting the infeasible solution onto the feasible set. Also, we see better predictive performance for \MPICS~in Table~\ref{table:accuracy_table}.

\subsection{Graph matching}
We consider a graph matching problem, where two adjacency matrices $A$ and $B$ in $\RR^{n\times n}$ are given, and we aim to minimize $\| A \Pi - \Pi B\|_F^2$ with respect to $\Pi$ over the set of doubly stochastic matrices. This can be seen as a natural convex relaxation of optimizing on the set of permutation matrices~\cite{Graph_Matching_Bach}. The set of doubly stochastic matrices is defined as the intersection of two products of $n$ simplices in dimension $n$ (which are indeed simple sets with efficient projection oracles). We compare the proposed Exact Penalty based Primal-Dual (\EPPD) approach to the Frank-Wolfe algorithm, for which the linear maximization oracle is an assignment problem, which can be solved in $O(n^3)$. Note that both algorithms have the same convergence rate in terms of number $t$ of iterations, as $O(1/t)$. We also include in comparison the Composite Mirror-Prox (CMP) based approach for solving semi-separable problems \cite{Nemirovski_mirror_prox_composite}. We present experimental results on randomly generated undirected graphs with number nodes = 200. In Figure~\ref{fig:convergence} we compare the convergence behavior of the methods. Although all the algorithms have very similar convergence rate, our \EPPD~method takes considerably lesser time. This is due to the fact the \EPPD~method just need projection onto simplices where as Frank-Wolfe needs to solve a linear maximization problem over the set of doubly stochastic matrices which is computationally more demanding. As the composite Mirror-Prox method requires two gradient computations and 2 proximal evaluations per iteration, every iteration of CMP is at least twice as costly as our primal-dual iteration; moreover, their formulation introduces $m$ more variables; this makes CMP approach slower than EPPD.

\section{Conclusions}
In this paper, we presented algorithms to minimize convex functions over intersections of simple convex sets, with explicit convergence guarantees for feasibilty and optimality of function values. Our work not only bounds the level of in-feasibility, currently missing in existing literature but also improves the convergence rate. This is mostly based on a new saddle-point formulation with an explicit proximity operator, and led to improved experimental behavior in two situations. Our work opens up several avenues for future work: (a) we can imagine letting the number $m$ of sets grow large or even to infinity and using a stochastic oracle~\cite{Nedic_Random} with efficient stochastic gradient techniques~\cite{sag}, (b) we could consider other geometries than the Euclidean one by considering mirror descent extensions. 

\acks{This work was supported by a generous grant under the CEFIPRA project and the INRIA associated team BIGFOKS2. }

\bibliography{projections}
\bibliographystyle{plain}

\newpage

\section{Appendix}

Before going into the proofs of the propositions and lemmas stated in the paper, we state the following proposition:

\begin{proposition}\label{prop:dist}
	Let $\A$ be a nonempty closed convex set in $\RR^n$. The distance function $d_{\A}$ given by 
	\BEQ  \label{eq:d_A} d_{\A}(\xx) \dfn  \inf_{\aa \in \A}\|\xx-\aa\|, \,\xx \in \RR^n, \EEQ
	has the following properties:
	\BNUM
		\item[1.] $d_{\A}$ is convex and Lipschitz continuous on $\RR^n$ with Lipschitz constant $1$.
		\item[2.] $d_{\A}$ has the following representation:  
		\BEQ d_{\A}(\xx) =  \sup_{\yy\in\RR^n:\|\yy\| \le 1 } [\xx\t\yy-\supp_{\A}(\yy)], ~\xx \in \RR^n, \label{eq:d_dual}\EEQ
		where $\supp_\A$ denotes the support function of $\A$.
		\item[3.] $\frac{\xx - \pr_{\A}(\xx)}{\|\xx - \pr_{\A}(\xx)\|}$ is a subgradient of $d_{\A}$ at $\xx \in \RR^n$.
	\ENUM
\end{proposition}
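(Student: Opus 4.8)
The plan is to establish the three listed properties in order, arranging things so that property~2 does the real work and property~3 drops out of the same computation. Throughout, the one nontrivial ingredient is the variational (obtuse-angle) characterization of the Euclidean projection onto a closed convex set, namely that $\ip{\xx-\pr_{\A}(\xx)}{\aa-\pr_{\A}(\xx)} \le 0$ for every $\aa \in \A$; everything reduces to exploiting this inequality.

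For property~1, I would obtain convexity by writing $d_{\A}(\xx) = \inf_{\aa}\bigl[\|\xx-\aa\| + \ind_{\A}(\aa)\bigr]$ and invoking the standard fact that partial minimization over $\aa$ of the jointly convex map $(\xx,\aa)\mapsto \|\xx-\aa\|+\ind_{\A}(\aa)$ yields a convex function of $\xx$. The Lipschitz bound is a one-line triangle-inequality argument: for any $\xx,\xx'$ and any $\aa\in\A$ we have $\|\xx-\aa\| \le \|\xx-\xx'\| + \|\xx'-\aa\|$; taking the infimum over $\aa\in\A$ gives $d_{\A}(\xx) \le \|\xx-\xx'\| + d_{\A}(\xx')$, and exchanging the roles of $\xx$ and $\xx'$ yields $|d_{\A}(\xx)-d_{\A}(\xx')| \le \|\xx-\xx'\|$.

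For property~2 I would prove the two inequalities separately. The direction $\ge$... rather, the easy direction is $(\le)$: for any $\yy$ with $\|\yy\|\le 1$ we have $\xx\t\yy - \supp_{\A}(\yy) = \inf_{\aa\in\A}\ip{\yy}{\xx-\aa}$, and since $\ip{\yy}{\xx-\aa}\le\|\xx-\aa\|$ by Cauchy--Schwarz, this is at most $d_{\A}(\xx)$; taking the supremum over feasible $\yy$ keeps the bound. The tight direction requires exhibiting a feasible $\yy$ that attains $d_{\A}(\xx)$. For $\xx\in\A$ take $\yy=0$. For $\xx\notin\A$, set $\hat\xx\dfn\pr_{\A}(\xx)$ and $\bar\yy\dfn(\xx-\hat\xx)/\|\xx-\hat\xx\|$, so $\|\bar\yy\|=1$. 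Dividing the projection inequality by $\|\xx-\hat\xx\|>0$ gives $\ip{\bar\yy}{\aa}\le\ip{\bar\yy}{\hat\xx}$ for all $\aa\in\A$, with equality at $\aa=\hat\xx$, whence $\supp_{\A}(\bar\yy)=\ip{\bar\yy}{\hat\xx}$. Then $\xx\t\bar\yy-\supp_{\A}(\bar\yy)=\ip{\bar\yy}{\xx-\hat\xx}=\|\xx-\hat\xx\|=d_{\A}(\xx)$, which supplies the matching lower bound and proves the representation.

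Property~3 then costs essentially nothing. Using the representation from property~2 with the same $\bar\yy$ (feasible since $\|\bar\yy\|=1$), for any $\xx'$ we get $d_{\A}(\xx')\ge\ip{\bar\yy}{\xx'}-\supp_{\A}(\bar\yy)=\ip{\bar\yy}{\xx'-\hat\xx}=\ip{\bar\yy}{\xx'-\xx}+\ip{\bar\yy}{\xx-\hat\xx}=\ip{\bar\yy}{\xx'-\xx}+d_{\A}(\xx)$, which is exactly the subgradient inequality; for $\xx\in\A$ the distance is zero and globally minimized there, so the convention $0/0=0$ correctly returns the subgradient $0$. The main obstacle, such as it is, is the tight direction of property~2, i.e. producing the explicit optimal dual vector $\bar\yy$; once the projection's variational characterization is in hand, properties~2 and~3 share a single short computation.
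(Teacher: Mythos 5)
Your proof is correct, and for the substantive parts it takes a genuinely different route from the paper. Part 1 coincides with the paper's argument (partial minimization of the jointly convex map for convexity, the two-sided triangle inequality for the Lipschitz bound). For part 2, however, the paper writes $\|\xx-\aa\| = \max_{\|\yy\|\le 1}\yy\t(\xx-\aa)$ and swaps $\min_{\aa\in\A}$ with $\max_{\|\yy\|\le 1}$ via Sion's minimax theorem (compactness of the unit ball), and for part 3 it identifies $\bigl(\pr_{\A}(\xx),\,(\xx-\pr_{\A}(\xx))/\|\xx-\pr_{\A}(\xx)\|\bigr)$ as a saddle point and appeals to the Danskin-type fact that a maximizer of a pointwise supremum of affine functions furnishes a subgradient. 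You instead prove the two inequalities of the representation directly --- Cauchy--Schwarz for the upper bound, and the explicit dual certificate $\bar\yy=(\xx-\pr_{\A}(\xx))/\|\xx-\pr_{\A}(\xx)\|$ combined with the obtuse-angle characterization $\ip{\xx-\pr_{\A}(\xx)}{\aa-\pr_{\A}(\xx)}\le 0$ for the lower bound --- and then verify the subgradient inequality for $\bar\yy$ by a one-line computation from the representation itself. Your version is more elementary (no minimax theorem, no appeal to saddle-point optimality, which the paper leaves somewhat terse) and is more careful about the degenerate case $\xx\in\A$, where you take $\yy=0$ and note that $0$ is a valid subgradient at a global minimizer, consistent with the paper's $\tfrac{0}{0}=0$ convention; the paper's version buys brevity and reuses the same min-max machinery it deploys elsewhere (e.g.\ in the proof of Lemma~\ref{lem:sad_sol}). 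Both arguments are complete.
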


\begin{proof}
	The convexity of $d_\A$ is clear from \re{d_A} as $(\xx,\aa) \mapsto \|\xx-\aa\|$ is jointly convex in $\xx$ and $\aa$. 
	
	Let $\xx,\xx' \in \RR$ and $\aa \in \A$. By triangle inequality of the Euclidean norm, we have:
	$$\|\xx - \aa\| ~\le~\|\xx-\xx'\|+\|\xx'-\aa\|.$$
	By taking minimum on both sides over $\aa \in \A$ we get
	$$ d_{\A}(\xx) \le \| \xx -\xx' \| + d_{\A}(\xx')$$
	Now, interchange the role of $\xx$ and $\xx'$ to arrive at the following:
	\BEQ \label{eq:d_lip}\forall \xx,\xx'\in \RR^n:~~ | d_{\A}(\xx) - d_{\A}(\xx')| \le \| \xx -\xx' \|. \EEQ
	This shows that $d_{\A}$ is Lipschitz continuous with Lipschitz constraint $1$.
	
	To prove the 2nd part we have for any $\xx \in \RR^n:$
	\BEA 
		 d_{\A}(\xx) &=& \min_{ \aa \in \A} \, \| \xx - \aa \|   \nonumber \\
		 &=& \min_{ \aa \in \A} ~\max_{ \yy\in\RR^n: \| \yy \| \le 1}  ~\yy\t(\xx - \aa) \label{eq:d_norm} \\
		 &=& \max_{ \yy\in\RR^n: \| \yy \| \le 1} ~\min_{ \aa \in \A} ~ \yy\t(\xx - \aa) \nonumber \\
		 &=& \max_{ \yy\in\RR^n: \| \yy \| \le 1}  ~[ \yy\t\xx - \max_{ \aa \in \A} \yy\t \aa ] \nonumber \\
		 &=& \max_{ \yy\in\RR^n: \| \yy \| \le 1}  ~[\xx\t\yy-\supp_{\A}(\yy)], \label{eq:d_final}
	\EEA
	where the 3rd equality follows from Min-Max Theorem \cite{Minmax_Sion} as $\{ \yy\in\RR^n: \| \yy \| \le 1\}$ is compact.
	
	Since, $\pr_{\A}(\xx)$ denotes the Euclidean projection of $\xx$ onto $\A$. We have $d_{\A}(\xx) = \| \xx - \pr_{\A}(\xx)\| = (\xx - \aa^*)\t\yy^*$, where $\aa^* = \pr_{\A}(\xx)$ and $\yy^* = \frac{\xx - \pr_{\A}(\xx)}{\|\xx - \pr_{\A}(\xx)\|}$. Therefore, $(\aa^*,\yy^*)$ is a saddle-point of \re{d_norm}. So, $\yy^*$ is an optimal solution of \re{d_final}. Now, note that any maximizer $\yy$ of \re{d_final} is a subgradient of $d_{\A}$ at $\xx$. This completes the proof of the 3rd part of the proposition. 
\end{proof}

\subsection{Proof of Proposition \ref{prop:penalty_function}}

	Since, $P$ is an absolute norm, it is non-decreasing in the absolute values of its components~\cite{Abs_Norm}. Therefore, (a) follows from the convexity of the distance functions $d_{\C_i}$ (part 1 of Proposition~\ref{prop:dist}) and monotonicity of the norm $P$.
	
	To prove part (b) we note that $\xx \in \C$ iff $d_{\C_i}(\xx) = 0\, \forall i$. Also, $h_{\!P}(\xx)$ is zero iff $d_{\C_i}(\xx) = 0\,\forall i$ as $P$ is a norm. 
	
	Recall that $(\C_1,\ldots,\C_m)$ is linearly regular with constant $\Upsilon$. Therefore, as a consequence of \re{reg_C}, we have \re{reg_h} with $\Upsilon_{\!\!P} = \Upsilon \,\max\{  \| \uu \|_{\infty} : P(\uu) = 1\}$, where $\| \cdot \|_{\infty}$ denotes the standard $\ell_{\infty}$-norm on $\RR^m$.

\subsection{Counter Examples for Proposition 11 of \cite{Bertsekas_Incremental}}

	Here we present counter examples to falsify the claims made by \cite{Bertsekas_Incremental} in their Proposition 11. We first show that their proof of Proposition 11 does not hold always. Then we present another counter example to prove that their Proposition 11 is not true in general, specifically, when standard constraint qualification condition \re{scc} is violated. 
	
	Let $X_1,\,\ldots,\,X_m$ be closed convex subsets of $Y \subset \RR^n$ with nonempty intersection and $f:Y\to \RR$ be Lipschitz continuous with Lipschitz constant $\lip_f$\,. We now state the incorrect claims of \cite{Bertsekas_Incremental} supported by our counter examples.

	{\bf Claim 1:} The construction given in proof of Proposition 11 in \cite{Bertsekas_Incremental} claims that the set of minima of $f$ over $\cap_{i=1}^m X_i$ coincides with the set of minima of 
	\BEQ F(\xx) ~~\dfn~~ f(\xx) \,+\, \gamma \, \sum_{i=1}^m d(\xx,X_i) \label{eq:Fx}\EEQ
	over $Y$ if 
	\BEQ \gamma_0 = 0,~\forall k\ge1:\,\gamma_k \, > \,\lip_f + \sum_{i=1}^{k-1}\gamma_i, ~\mbox{and}~\gamma \ge \gamma_m . \label{eq:gamma} \EEQ
	
	{\bf Counter Example 1:} We present the following counter example to show that the above claim of \cite{Bertsekas_Incremental} is not always true. Consider the following set-up: \\
	$$n=2,~Y = \RR^2, ~m=2,$$
	$$X_1 \,=\, \{ (x,y) \in \RR^2 \,|\, 0.1 x + y \le 1 \},$$ 
	$$X_2 \,=\, \{ (x,y) \in \RR^2 \,|\, 0.1 x - y \le 1\}, $$ 
	$$f(x,y) =  -x-y, ~~\forall (x,y) \in \RR^2.$$
	Clearly, $f$ is Lipschitz continuous with Lipschitz constant $\lip_f =\sqrt{2}$. Now, satisfying the conditions given in \re{gamma}, we choose $\gamma_1 = 1.5$, $\gamma_2 = 3$ and $\gamma = 4$. With this $F$ defined in \re{Fx} becomes:
	\BEQ F(x,y) = -x-y+\frac{4([0.1 x+y-1]_+ + [0.1 x-y-1]_+)}{\sqrt{1.01}}, \nonumber\EEQ
	where $[a]_+ = \max\{a,0\}$ for any $a \in \RR$. \\
	Note that $(10,0)$ is the only minima of $f$ over $X_1 \cap X_2$. But, $(10,0)$ can not be a minima of $F$ as we have $F(20,0) < F(10,0)$. In fact, $F$ does not have any minima on $\RR^2$ as $F(x,0) \to -\infty $ when $x \to \infty$. Hence, the set of minima of $F$ over $Y$ need not be the same as the set of minima of $f$ over $\cap_{i=1}^m X_i$ even if $\gamma$ satisfies the condition given in \re{gamma}. Thus, the proof of {Proposition 11} in \cite{Bertsekas_Incremental} stands void.
	
	We mention that the above example possesses the following linear regularity property: 
	\BEQ \forall \xx \in \RR^2:~ d(\xx,X_1\cap X_2) ~\le~ \Upsilon \, \max_{1 \le i \le 2} d(\xx,X_i)\,, \label{eq:linreg} \EEQ
	where $\Upsilon = 1 / \sin( \tan^{-1}(0.1))$. So, for the above example one can be verify that setting $\gamma > \Upsilon \lip_f$ in \re{Fx} suffices for the set of minima of $F$ over $Y$ to coincide with that of $f$ over $X_1 \cap X_2$. 
	
	{\bf Claim 2:} Proposition 11 in \cite{Bertsekas_Incremental} claims that $\exists \bar{\gamma} > 0$ such that the set of minima of $f$ over $\cap_{i=1}^m X_i$ coincides with the set of minima of $F$ (as defined in \re{Fx}) over $Y$ for all $\gamma \ge \bar{\gamma}$.
	
	{\bf Counter Example 2:} We construct the following counter example to show that the above claim of \cite{Bertsekas_Incremental} is false. Consider the following set-up: \\
	$$n=2,~Y = \RR^2, ~m=2,$$
	$$X_1 \,=\, \{ (x,y) \in \RR^2 \,|\, y \ge x^2 \},$$ 
	$$X_2 \,=\, \{ (x,y) \in \RR^2 \,|\, y=0 \}, $$ 
	$$f(x,y) =  -2 x, ~~\forall (x,y) \in \RR^2.$$
	Clearly, $f$ is Lipschitz continuous with Lipschitz constant $\lip_f =2$. We note that $ X_1 \cap X_2 = \{ (0,0)\}$. Therefore, $(0,0)$ is the only minima of $f$ over $X_1 \cap X_2$. Fix any $\gamma >0$ in \re{Fx}. Now, for all $x \in \RR$ we have
	\BEQ  F(x,x^2) =  -2x + \gamma x^2 .\nonumber \EEQ
	Setting $x = \gamma^{-1}$ we have 
	\BEQ F( \gamma^{-1}, \gamma^{-2}) = - \gamma^{-1} ~<~ F(0,0).\nonumber \EEQ
	Therefore, $(0,0)$ is not a minima of $F$ over $\RR^2$ for any $\gamma > 0$. Hence, $\nexists \bar{\gamma} > 0$ such that $(0,0)$ is a minima of $F$ for any $\gamma \ge \bar{\gamma}$. This establishes that claim 2 is not true in general. 
	
	One can verify that $\nexists \Upsilon >0$ such that \re{linreg} holds where $X_1, X_2$ are as in example 2. We mention that standard constraint qualification (SCQ) condition \re{scc} is not satisfied in this example. Recall that SCQ, although a very mild requirement, is sufficient to ensure linear regularity property when the intersection of the closed convex sets is bounded.

\subsection{Proof of Proposition \ref{prop:exact_penalty}}

	Recall that $f_* = \min_{\xx\in\C}f(\xx)$. As $f$ is Lipschitz continuous on $\X$ with Lipschitz constant $\lip_f$, we have $\forall \rho \ge \lip_f:$
	\BEA 
	&\forall& \,\xx,\yy \in \X:~ f(\yy) - f(\xx) ~\le ~ \rho \, \| \yy - \xx \|,  \nonumber \\
	\Rightarrow \, &\forall& \xx \in \X:~ \inf_{ \yy \in \C}f(\yy) - f(\xx) ~\le ~ \rho \,\inf_{\yy \in \C} \| \yy - \xx \|, \nonumber \\
	\Rightarrow \, &\forall& \xx \in \X:~ f_* ~\le ~ f(\xx) \,+\, \rho \,d_\C(\xx). \label{eq:penalty_lemma} 
	\EEA
	Using regularity of $h_{\!P}$ from \re{reg_h}, we have $\forall \lambda \ge 0:$
	\BEQ\label{eq:hd_ineq} f^{\lambda}_* = \inf_{\xx \in \X} f(\xx) + \lambda \,h_{\!P}(\xx) \ge \inf_{\xx \in \X} f(\xx) + \frac{\lambda}{\Upsilon_{\!\!P}}\, d_\C(\xx).\EEQ
	
	If $\lambda \ge  \Upsilon_{\!\!P} \lip_f$ then applying \re{penalty_lemma} in \re{hd_ineq}, we obtain $f^{\lambda}_* \ge f_*$. On the other hand, we have $f^{\lambda}_* \le f_*$ for all $ \lambda \ge 0$ as $\C \subset \X$ and $h_{\!P}$ is zero on $\C$. Thus, $f^{\lambda}_* = f_*$ and a minima of $f$ over $\C$ is also a minima of $f^{\lambda}$ over $\X$.
	
	To prove part [b] of the proposition it is enough to show the following: if $\xx_\lambda^*$ is an optimal solution of \re{p1} then $\xx_\lambda^* \in \C$ when $\lambda >  \Upsilon_{\!\!P} \lip_f$. Assume $\xx_\lambda^* \notin \C$. So, $d_\C(\xx_\lambda^*) > 0$. Now, using \re{reg_h} and $\lambda >  \Upsilon_{\!\!P} \lip_f$ we have
	\BEA f^{\lambda}_* = f(\xx_\lambda^*) + \lambda\, h_{\!P}(\xx_\lambda^*) &\ge& f(\xx_\lambda^*) + \frac{\lambda}{\Upsilon_{\!p}}\, d_\C(\xx_\lambda^*)  \nonumber \\
	&>& f(\xx_\lambda^*) + \lip_f \, d_\C(\xx_\lambda^*). \nonumber 
	\EEA
	Now, applying \ref{eq:penalty_lemma} we obtain $f^{\lambda}_* > f_*$ which is a contradiction to the first part of the theorem. Thus, every minimizer $\xx_\lambda^*$ of $f^{\lambda}$ over $\X$ must belong to $\C$  when $\lambda >  \Upsilon_{\!\!P} \lip_f$. This together with the fact that $f^\lambda_* =f_*$ shows that $\xx_\lambda^*$ is also a minimizer of $f$ over $\C$. 
	
	Now, we focus on part [c] of the proposition. By definition of $\eps$-optimal solution of \re{p1}, we have
	\BEQ f(\xx_\eps) + \lambda\, h_{\!P}(\xx_\eps) \,\le\, f^{\lambda}_* + \eps. \label{eq:bound0} \EEQ
	Now, applying \re{reg_h} and using $f^{\lambda}_* = f_*$ from part [a], we get
	\BEQ f(\xx_\eps) + \frac{\lambda}{ \Upsilon_{\!\!P} }\,d_\C(\xx_\eps) \,\le\, f_* +\eps. \label{eq:bound1} \EEQ
	Putting $\rho =\lip_f$ in \re{penalty_lemma} we have
	\BEQ f(\xx_\eps) + \lip_f\,d_\C(\xx_\eps) \,\ge\, f_*. \label{eq:bound2} \EEQ
	Now, combining \re{bound1}, \re{bound2} and using $\lambda \ge 2 \Upsilon_{\!\!P} \lip_f$ we achieve $$d_{\C}(\xx_\eps) \le \frac{\eps  \Upsilon_{\!\!P}}{\lambda - \Upsilon_{\!\!P} \lip_f} \le  \frac{\eps}{\lip_f}.$$
	Note that we also have $f(\xx_\eps)-f_* \le \eps$ from \re{bound0} as $ h_{\!P}$ is always non-negative. This proves that $\xx_\eps$ is also an $\eps$-optimal $\eps$-feasible solution of \re{p0}. Now it remains to show that $f(\pr_{\C}(\xx_\eps)) \le f_*+\eps$. This follows from Lipschitz continuity of $f$ and \re{reg_h} as shown below:
	\BEA f(\pr_{\C}(\xx_\eps))  &\le& [f(\pr_{\C}(\xx_\eps)) - f(\xx_\eps)] + f(\xx_\eps) \nonumber \\
	&\le& \lip_f \,\| \pr_{\C}(\xx_\eps) - \xx_\eps \| + f(\xx_\eps) \nonumber \\
	&=& \lip_f  \, d_{\C}(\xx_\eps)+ f(\xx_\eps) \nonumber \\
	&\le& \lip_f \Upsilon_{\!\!P}\,  h_{\!P}(\xx_\eps) + f(\xx_\eps) \nonumber \\
	&\le& \lambda \, h_{\!P}(\xx_\eps) + f(\xx_\eps) ~\le~ f_* +\eps. \nonumber
	\EEA

\subsection{Proof of Lemma \ref{lem:subgrad_h}}

	Let $\xx, \xx \in \RR^n$. To establish Lipschitz continuity of $h_{\!P}$, we have
	\BEA
	& &|h_{\!P}(\xx)- h_{\!P}(\xx')| \nonumber \\
	&=& |P(d_{\C_1}\!(\xx),\ldots,d_{\C_m}\!(\xx))-P(d_{\C_1}\!(\xx'),\ldots,d_{\C_m}\!(\xx'))| \nonumber \\
	&\le& P(d_{\C_1}\!(\xx)-d_{\C_1}\!(\xx'),\ldots,d_{\C_m}\!(\xx)-d_{\C_m}\!(\xx')) \nonumber \\
	&=& P(|d_{\C_1}\!(\xx)-d_{\C_1}\!(\xx')|,\ldots,|d_{\C_m}\!(\xx)-d_{\C_m}\!(\xx')|) \nonumber \\
	&\le& P(\|\xx -\xx'\|,\ldots,\|\xx -\xx'\|)  \nonumber \\
	&=& \|\xx -\xx'\|\, P(\one),\nonumber
	\EEA
	where the first inequality is a result of triangle inequality of norm $P$ and the 2nd equality is due to $P$ being an absolute norm. Recall that  an absolute norm is monotonic, that is, the norm is monotonically non-decreasing in the absolute values of its components. Using this monotonicity property of $P$ together with \re{d_lip} results in the 2nd inequality above. Thus, $h_{\!P}$ is Lipschitz continuous on $\RR^n$ and $P(\one)$ is a Lipschitz constant.
	
	Using the property that dual norm of an absolute norm is also an absolute norm, we have $P_*$ as an absolute norm. Now, to find a subgradient we first present the following characterization of $h_{\!P}$ using dual norm $P_*$:
	\BEA
	h_{\!P}(\xx) &=& P(d_{\C_1}\!(\xx),\ldots,d_{\C_m}\!(\xx)) \nonumber \\
	&=& \max_{\uu\in \RR^m:\, P_*(\uu)\le 1}~\sum_{i=1}^m u_i d_{\C_i}\!(\xx) \nonumber \\
	&=& \max_{\uu\in \RR^m_+:\, P_*(\uu)\le 1}~\sum_{i=1}^m u_i d_{\C_i}\!(\xx), \label{eq:h_sub}
	\EEA 
	where the last equality follows because $d_{\C_i}(\xx) \ge 0\, \forall i$ and $P_*$ is an absolute norm. Since distance functions are convex, \re{h_sub} represents $h_{\!P}$ as maximum of a family of convex functions. Therefore, if $\uu^*$ is a maximizer of \re{h_sub} and $\xi_i$ denotes a subgradient of $d_{\C_i}$ at $\xx$, then $\sum_{i=1}^m u_i^*\xi_i$ is a subgradient of $h_{\!P}$ at $\xx$. Now, part 3 of Proposition~\ref{prop:dist} says we can use $\xi_i = \frac{\xx - \pr_{\C_i}(\xx)}{\| \xx - \pr_{\C_i}(\xx) \|}$. This completes the proof.

\subsection{Split-Projection subgradient (\SPS) Algorithm}
\begin{algorithm}\label{algo:subgradient}
		\caption{Split-Projection Subgradient (\SPS) Algorithm to solve \re{p0} when $f$ is nonsmooth}
		\begin{algorithmic}
			\STATE Input: $\lambda>0$, number of iterations $T$.
			\STATE  Initialization: ~$\xx^{(1)} \in \X$.
			\FOR {$t=1$ {\bfseries to} $T$}
			\STATE  Get subgradient $f'( \xx^{(t)})$ of $f$ at $\xx^{(t)}$.
			\STATE  Get projections: $\pr_{\C_i}\!(\xx^{(t)}),\, \ldots,\pr_{\C_m}\!(\xx^{(t)})$. 
			\STATE  Compute $h_{\!P}'(\xx^{(t)})$ using Lemma \ref{lem:subgrad_h}.
			\STATE  Set $\xi^{(t)} := f'( \xx^{(t)}) + \lambda \, h_{\!P}'(\xx^{(t)})$.
			\STATE  Choose step-size ~$\gamma_t > 0$.
			\STATE  Update $\xx^{(t+1)} := \pr_{\X}\!\left( \xx^{(t)} - \gamma_t \xi^{t}\right)$.
			\ENDFOR
			\STATE Output: $\widehat{\xx}^{(T)} ~\dfn ~ [\sum_{t=1}^T\gamma_t^{-1} \xx^{(t)} ]\, /\,[\sum_{i=1}^T\gamma_i^{-1}]$.
		\end{algorithmic}
\end{algorithm}

\subsection{Proof of Proposition \ref{prop:nonsmooth_convex}}

	We consider the \SPS~algorithm applied to problem \re{p0} with $\lambda \ge 2 \Upsilon_{\!\!P} \lip_f$. Let $D \dfn \max_{\xx,\xx'\in \X}\|\xx-\xx'\|$ be the diameter of the compact set $\X$. Let the stepsizes be chosen as $\gamma_t = \frac{\eta}{\sqrt{t}} , 1 \le t \le T$, for some $\eta >0$. Note that \SPS~algorithm for problem \re{p0} is nothing but application of standard subgradient algorithm to the exact penalty based reformulation \re{p1}. Therefore, we apply the convergence rate guarantee of standard subgradient method from Corollary~2 of \cite{Nedic_weight_avg} which provides the following bound on the output $\widehat{\xx}^{(T)}$:
	\BEQ f^{\lambda}(\widehat{\xx}^{(T)}) - f^{\lambda}_* ~\le~  \frac{3}{4\sqrt{T}}\left( \frac{D^2}{\eta}+ \eta [\lip_f+\lambda P(\one)]^2 \right), \label{eq:nonsmooth_bound}\EEQ
	where $[\lip_f+\lambda P(\one)]$ is a Lipschitz constant of $f^{\lambda} \equiv f + \lambda h_{\!P}$. By minimizing the right hand side of \re{nonsmooth_bound} we obtain optimal value of $\eta$ as $\frac{D}{\lip_f+\lambda P(\one)}$. Now, substituting the optimal value of $\eta$ in \re{nonsmooth_bound} we get:
	$$ f^{\lambda}(\widehat{\xx}^{(T)}) - f^{\lambda}_* \le  \frac{3 D [\lip_f+\lambda P(\one)] }{2\sqrt{T}}.$$
	Recall that $\lambda \ge 2 \Upsilon_{\!\!P}\lip_f$. Thus, to produce an $\eps$-optimal solution of \re{p1} we need $O\big(\frac{\lip_f^2D^2}{\eps^2}\big)$ iterations of the \SPS-algorithm. As per Proposition~\ref{prop:exact_penalty} such $\eps$-optimal solutions of \re{p1} are in fact $\eps$-optimal ${\eps}$-feasible solutions of \re{p0}. This completes the proof.

\subsection{Proof of Proposition \ref{prop:nonsmooth_strong}}

	For strongly convex case, we set stepsizes in the \SPS~algorithm as $\gamma_t = \frac{2}{\mu_{\!f}(t+1)} , 1 \le t \le T$. Recall that \SPS~algorithm is nothing but an instance of standard subgradient algorithm applied to \re{p1}. Now, we quote the following convergence rate guarantee of standard subgradient method from Corollary 1 of \cite{Nedic_weight_avg}:
	\BEQ f^{\lambda}(\widehat{\xx}^{(T)}) - f^{\lambda}_* ~\le~  \frac{2}{\mu_{\!f} T}[\lip_f+\lambda P(\one)]^2.\nonumber \EEQ
	Also, we choose $\lambda \ge 2 \Upsilon_{\!\!P}\lip_f$. Therefore, \SPS~algorithm produces an $\eps$-optimal solution of \re{p1} in $O\big(\frac{\lip_f^2}{\mu_{\!f}\eps}\big)$ iterations. Now, applying Proposition~\ref{prop:exact_penalty} we achieve the desired $O(1/\eps)$ complexity for an $\eps$-optimal $\eps$-feasible solution of \re{p0}.

\subsection{Proof of Lemma \ref{lem:hp_dual}}

	Applying Proposition \ref{prop:dist} to $\C_i$ we have $\forall \xx \in \RR^n:$
	\BEQ d_{\C_i}(\xx) =  \max_{\yy_i \in\RR^n:\|\yy_i \| \le 1 } [\xx\t\yy_i-\supp_{\A}(\yy_i)],\EEQ
	where the maxima is achieved at a point with $\|y_i\| = 1$.
	Using the above characterization of $d_{\C_i}$ in \re{h_sub} we get
	\BEA
	h_{\!P}(\xx)= \max_{\uu\in \RR^m_+:\, P_*(\uu)\le 1,\,\,\|\yy_i \| = 1 \,\forall i} \sum_{i=1}^m  u_i [\xx\t\yy_i-\supp_{\C_i}(\yy_i)]. \nonumber
	\EEA 
	Note that $\forall \,u_i \ge 0: ~u_i \supp_{\C_i}(\yy_i) = \supp_{\C_i}(\uu_i \yy_i)$. Therefore, making the variable transformation $u_i \yy_i \mapsto \tilde{\yy}_i$ we achieve the desired form:
	\BEA
	h_{\!P}(\xx)= \max_{\tilde{\yy}_i \in \RR^n \forall i:\, P_*\!(\|\tilde{\yy}_1\|,\ldots,\|\tilde{\yy}_m\|)\le 1}\, \sum_{i=1}^m [\xx\t\tilde{\yy}_i-\supp_{\C_i}(\tilde{\yy}_i)] . \nonumber
	\EEA

\subsection{Proof of Lemma \ref{lem:sad_sol}}

	From \re{p1}, \re{hp_dual} and \re{p2} we have $$f^{\lambda}(\xx) = \max_{\bY\in\Y^{\lambda}_P} \L(\xx,\bY )$$ and
	\BEA
	f^{\lambda}_* = \min_{\xx}f^{\lambda}(\xx) &=&  \min_{\xx \in \X} \max_{\bY\in\Y^{\lambda}_P} \L(\xx,\bY ) \nonumber \\
	&=& \max_{\bY\in\Y^{\lambda}_P} \min_{\xx \in \X} \L(\xx,\bY ) \nonumber \\
	&=& \max_{\bY\in\Y^{\lambda}_P} q(\bY) \label{eq:dual_sad},
	\EEA
	where we define $ q(\bY) \dfn \min_{\xx \in \X} \L(\xx,\bY )$ and swapping of $\min$ \& $\max$ holds due to Min-Max Theorem \cite{Minmax_Sion}. 
	We are given $(\xx_\eps,\bY_\eps) \in \X \times \Y^{\lambda}_P$ ~such that
	\BEA
	& &\sup_{\xx \in \X,\,\bY\in\Y^{\lambda}_P} ~[~\L(\xx_\eps,\bY ) - \L(\xx,\bY_\eps)~] ~\le~ \eps \nonumber \\
	& \Rightarrow & [ \sup_{\bY\in\Y^{\lambda}_P} \L(\xx_\eps,\bY ) - \inf_{\xx \in \X}\L(\xx,\bY_\eps)] ~\le~ \eps \nonumber \\
	& \Rightarrow & [ f^{\lambda}(\xx_\eps) - q(\bY_\eps) ] ~\le~ \eps \nonumber \\
	& \Rightarrow & [ f^{\lambda}(\xx_\eps) - f^{\lambda}_*]+[f^{\lambda}_* - q(\bY_\eps) ] ~\le~ \eps. \nonumber 
	\EEA
	Now, from \re{dual_sad} we have $ q(\bY_\eps) \le \max_{\bY\in\Y^{\lambda}_P} q(\bY) = f^{\lambda}_*$. Thus, we have
	$f^{\lambda}(\xx_\eps) - f^{\lambda}_* \le \eps$. Therefore, $\xx_\eps$ is an $\eps$-optimal solution of \re{p1}. Now, by virtue of part-c of Proposition~\ref{prop:exact_penalty} $\xx_\eps$ is an $\eps$-optimal $\eps$-feasible solution of \re{p0}.

\subsection{Proof of Lemma \ref{lem:prox_g}}
	By definition of proximal operator, we have 
	\BEQ \prox_{\gamma g}(\bY) = \argmin_{\bY' \in \Y^{\lambda}_P}\sum_{i=1}^m\left[\frac{1}{2\gamma}\| \yy_i - \yy_i'\|^2 + \supp_{\C_i}(\yy_i')\right], \nonumber \EEQ
	where $\Y^{\lambda}_P \dfn \{ \bY \in  \otimes_{i=1}^m \RR^n \,|\,P_*(\|\yy_1\|,\ldots,\|\yy_m\|) \le \lambda\}$. Utilizing monotonicity of norm $P_*$, we break the above minimization problem as follows: 
	\BEQ 
	\min_{\uu \in \RR^m_+:\,P_*(\uu)\le 1} \sum_{i=1}^m \left[\frac{1}{2\gamma}\| \yy_i - \yy_i^*\|^2 + \supp_{\C_i}(\yy_i^*)\right], \label{eq:uy1} \EEQ
	where for a fixed $\uu=(u_1,\ldots,u_m) \in \RR^m_+$ we find $\yy_i^*$ as
	\BEA
	\yy_i^* = \argmin_{\yy_i' \in \RR^n : \|\yy_i'\|\le \eta_i } ~\frac{1}{2\gamma}\| \yy_i - \yy_i'\|^2 + \supp_{\C_i}(\yy_i') \nonumber \\
	=  \argmin_{\yy_i' \in \RR^n : \|\yy_i'\|\le u_i } \max_{\xx_i \in \C_i } ~\frac{1}{2\gamma}\| \yy_i - \yy_i'\|^2 + \xx\t_{i} \yy_i' \nonumber \\
	=  \max_{\xx_i \in \C_i } \argmin_{\yy_i' \in \RR^n : \|\yy_i'\|\le u_i } ~\frac{1}{2\gamma}\| \yy_i - \yy_i'\|^2 +\xx\t_{i} \yy_i' \label{eq:xy1}
	\EEA
	Now, for a fixed $\xx_i$, the inner minimization w.r.t. $\yy_i'$ is achieved at
	$$\yy_i'^* = \min\left\{1, \frac{u_i}{\|\yy_i - \gamma \xx_i\|}\right\} [\yy_i - \gamma \xx_i].$$ 
	Substituting $\yy_i'$ with $\yy_i'^*$ in \re{xy1} we now find the value of $\xx_i$ as 
	\BEA
	 & & \argmax_{\xx_i \in \C_i }\,\frac{1}{2\gamma}\| \yy_i - \yy_i'^*\|^2 +\xx\t_{i} \yy_i^* \nonumber \\
	 &=& \argmax_{\xx_i \in \C_i }\,\| (\yy_i-\gamma {\xx_i}) - \yy_i'^*\|^2 - \| (\yy_i-\gamma {\xx_i})\|^2 \nonumber \\
	 &=& \argmin_{\xx_i \in \C_i } \,\|\yy_i-\gamma {\xx_i}\|^2 (1-[1-\min\{1, \frac{u_i}{\|\yy_i - \gamma \xx_i\|}\}]^2). \nonumber
	 \EEA
	The last minimization actually boils down to $\argmin_{\xx_i \in \C_i } \,\|\yy_i-\gamma {\xx_i}\|$ which is achieved at $ \xx_i = \pr_{\C_i}(\gamma^{-1}\yy_i)$. Now substituting this value of $\xx_i$ in $\yy_i'^*$ and 
	defining $\hat{\yy}_i \dfn \yy_i - \gamma \pr_{\C_i}(\gamma^{-1}\yy_i)$ we have
	 \BEQ
	\yy_i^* 
	= \min\left\{u_i,\|\hat{\yy}_i\|\right\}\,\frac{\hat{\yy}_i}{{\|\hat{\yy}_i\|}} \nonumber.
	\EEQ
	This bring us to the problem of finding optimal $u_i$ by solving \re{uy1} with $y_i^*$ as above. Since, $P_*$ is a monotonic norm we can equivalently solve the following:
	\BEQ 
		\min_{\etab \in \RR^m_+:\,P_*(\etab)\le 1} \sum_{i=1}^m \left[\frac{1}{2\gamma}\| \yy_i - \yy_i^*\|^2 + \supp_{\C_i}(\yy_i^*)\right], \label{eq:yy1}
	\EEQ
	where $\eta_i = \min\{ u_i,\|\yy_i\|\} $, $\yy_i^* =  \frac{\eta_i}{\|\hat{\yy}_i\|}\hat{\yy}_i$ and $\hat{\yy}_i = \yy_i - \gamma \pr_{\C_i}(\gamma^{-1}\yy_i)$. Using the definition of $\pr_{\C_i}(\gamma^{-1}\yy_i)$ we find $\supp_{\C_i}(\yy_i^*) = \pr_{\C_i}(\gamma^{-1}\yy_i)\t \yy_i^*$ which we substitute in \re{yy1}. Therefore \re{yy1} becomes
	\BEA
	& &\argmin_{\etab \in \RR^m_+:\,P_*(\etab)\le 1} \sum_{i=1}^m \left[\frac{1}{2\gamma}\| \yy_i - \yy_i^*\|^2 +   \pr_{\C_i}(\gamma^{-1}\yy_i)\t \yy_i^* \right] \nonumber \\
	&=& \argmin_{\etab \in \RR^m_+:\,P_*(\etab)\le 1} \sum_{i=1}^m \| \hat{\yy}_i - \yy_i^*\|^2 \nonumber	\\
	&=& \argmin_{\etab \in \RR^m_+:\,P_*(\etab)\le 1} \sum_{i=1}^m \left\| \hat{\yy}_i - \frac{\eta_i}{\|\hat{\yy}_i\|}\hat{\yy}_i \right\|^2 \nonumber \\
	&=& \argmin_{\etab \in \RR^m_+:\,P_*(\etab)\le 1} \sum_{i=1}^m [\,\eta_i - \|\hat{\yy}_i\|\, ]^2. \nonumber \\
	&=& \argmin_{\etab \in \RR^m:\,P_*(\etab)\le 1} \sum_{i=1}^m [\,\eta_i - \|\hat{\yy}_i\|\, ]^2. \nonumber 
	\EEA	
	where the last equality holds as $P_*$ is an absolute norm. Thus, the optimal $\etab$ is the projection of $( \|\hat{\yy}_1\|, \ldots, \|\hat{\yy}_m\|)$ onto $\{\etab \in \RR^m:\,P_*(\etab)\le 1\}$. Let $(r_1,\ldots,r_m)$ be the projection. Therefore, substituting the optimal value of $\eta_i$ in $\yy_i^*  =  \frac{\eta_i}{\|\hat{\yy}_i\|}\hat{\yy}_i$ we achieve the desired result.

\subsection{Exact Penalty based Primal Dual (\EPPD) Algorithm}

Before stating the algorithm we recall the following notation: $\bY \dfn (\yy_1,\ldots,\yy_m) \in \otimes_{i=1}^m \RR^n$ and the operator $\bA$ maps $\xx$ to $(\xx,\ldots,\xx) \in \otimes_{i=1}^m \RR^n$. Let $M_{\!f} > 0$ be the Lipschitz constant of the gradient of $f$ and $D$ be the diameter of the set $\X$. We set the stepsize parameters $\tau,\,\gamma$ in the algorithm as follows:
$$ \gamma = \frac{\lambda} {D}, ~\tau = \frac{1}{M_{\!f} + m \gamma} .$$

\begin{algorithm}\label{algo:eppd}
	\caption{Exact Penalty based Primal Dual (\EPPD) Algorithm to solve \re{p0} when $f$ is smooth }
	\begin{algorithmic}
		\STATE Input: $\lambda>0$, number of iterations $T$.
		\STATE Initialization: ~$\xx^{(0)} \in \X,\,\bY^{(0)} = 0 $.
		\STATE Choose stepsize parameters $\tau,\gamma >0$.
		\FOR {$t=0$ {\bfseries to} $T-1$}
		\STATE $\xx^{(t+1)} := \pr_{\X}\!\left( \xx^{(t)} - \tau[ \grad\!f(\xx^{(t)}) + \sum_{i=1}^m \yy_i^{(t)} ] \right).$
		\STATE $\bY^{(t+1)} := \prox_{\gamma g}\!\left( \bY^{(t)}+\gamma \bA [2\xx^{(t+1)} - \xx^{(t)}] \right).$
		\ENDFOR
		\STATE Output: $\widehat{\xx}^{(T)} \dfn \frac{1}{T}\sum_{t=1}^T \xx^{(t)}$.
	\end{algorithmic}
\end{algorithm}

\subsection{Exact Penalty based Accelerated Primal Dual (EPAPD) Algorithm}

\begin{algorithm}\label{algo:eppd_strong}
	\caption{Exact Penalty  based Accelerated Primal Dual (\EPAPD) Algorithm to solve \re{p0} when $f$ is smooth and strongly convex}
	\begin{algorithmic}
		\STATE Input: $\lambda>0$, number of iterations $T$.
		\STATE Initialization: ~$\xx^{(0)} \in \X,\,\bY^{(0)} = 0, \xx^{(-1)} = \xx^{(0)}$.
		\FOR {$t=0$ {\bfseries to} $T-1$}
		\STATE Choose parameters $\tau_t,\,\gamma_t,\theta_t. $
		\STATE $\bY^{(t+1)} := \prox_{\gamma g}\!\left( \bY^{(t)}+\gamma_t \bA [\xx^{(t)} + \theta_t( \xx^{(t)} -\xx^{(t-1)})] \right).$
		\STATE $\xx^{(t+1)} := \pr_{\X}\!\left( \xx^{(t)} - \tau_t [ \grad\!f(\xx^{(t)}) + \sum_{i=1}^m \yy_i^{(t+1)} ] \right).$
		\ENDFOR
		\STATE Output: $\widehat{\xx}^{(T)} \dfn \frac{1}{T}\sum_{t=1}^T \xx^{(t)}$.
	\end{algorithmic}
\end{algorithm}
Let $\mu_{\!f} >0$ be the modulus of strong convexity of $f$ and $M_{\!f} > 0$ be the Lipschitz constant of the gradient of $f$. As in \cite{Pock_2016_ergodic} we choose the algorithm parameters $\tau_t,\,\gamma_t,\theta_t $ based on the following recursions:
\BEA
\theta_0 &=& 1, ~ \tau_0 = \frac{1}{2M_{\!f}},~ \gamma_0 = \frac{M_{\!f}}{m},\nonumber  \\
\theta_{t+1} &:=& \frac{1}{\sqrt{1+\mu\tau_t}}, \nonumber \\
\tau_{t+1} &:=& \theta_{t+1} \tau_t , \nonumber \\
\gamma_{t+1} &:=&  \gamma_t / \theta_{t+1}. \nonumber
\EEA

\subsection{Split Mirror Prox (\MPICS) Algorithm}
\begin{algorithm}\label{algo:smp}
	\caption{Split Mirror Prox (\MPICS) Algorithm to solve \re{p0} when $f$ is given by \re{spf} }
	\begin{algorithmic}
		\STATE Input: $\lambda>0$, number of iterations $T$.
		\STATE Initialization: ~$\xx^{(1)} \in \X,\,\zz^{(1)} \in \Z,\,\bY^{(1)} = 0$.
		\STATE Choose stepsizes $\gamma_x,\,\gamma_y,\,\gamma_z >0$.
		\FOR {$t=1$ {\bfseries to} $T$}
		\STATE $\bullet$~$\tilde{\xx}^{(t)} := \pr_{\X}\!\left( \xx^{(t)} - \gamma_x[ \grad_{\!\xx}F( \xx^{(t)},\zz^{(t)})+\sum_{i=1}^m\yy_i^{(t)} ] \right).$
		\STATE $\bullet$~$\tilde{\zz}^{(t)} := \pr_{\Z}\!\left( \zz^{(t)} + \gamma_z \grad_{\!\zz}F( \xx^{(t)},\zz^{(t)}) \right).$
		\STATE $\bullet$~$\tilde{\bY}^{(t)} := \prox_{\gamma_y g}\!\left( \bY^{(t)}+\gamma_y \bA\xx^{(t)} \right).$
		\STATE $\bullet$~$g'(\tilde{\bY}^{(t)} ) := \left( \pr_{\C_1}\!(\frac{\yy^{(t)}_1}{\gamma_y}),\ldots,\, \pr_{\C_m}\!(\frac{\yy^{(t)}_m}{\gamma_y})\right).$
		\STATE $\bullet$~$\xx^{(t+1)} := \pr_{\X}\!\left( \xx^{(t)} - \gamma_x[ \grad_{\!\xx}F( \tilde{\xx}^{(t)},\tilde{\zz}^{(t)} ) + \sum_{i=1}^m\tilde{\yy}_i^{(t)} ] \right).$
		\STATE $\bullet$~$\zz^{(t+1)} := \pr_{\Z}\!\left( \zz^{(t)} + \gamma_z \grad_{\!\zz}F(\tilde{\xx}^{(t)},\tilde{\zz}^{(t)} ) \right).$
		\STATE $\bullet$~$\bY^{(t+1)} := \pr_{\Y^{\lambda}_{\!P}}\!\left( \bY^{(t)}+\gamma_y [\bA\tilde{\xx}^{(t)} - g'(\tilde{\bY}^{(t)} )]\right).$
		\ENDFOR
		\STATE Output: $\widehat{\xx}^{(T)} \dfn \frac{1}{T}\sum_{t=1}^T \tilde{\xx}^{(t)}$.
	\end{algorithmic}
\end{algorithm}
We recall from Lemma~\ref{lem:prox_g} that proximal operator of $g$ is computed through projections onto the sets $\C_1,\ldots,\,\C_m$. The same projections were used to construct $g'(\tilde{\bY}^{(t)} )$ in the above algorithm. Hence, every iteration of \MPICS~algorithm requires projecting onto each $\C_i$'s only once. Also, the last projection onto the set $\Y^{\lambda}_{\!P}$ can be computed as follows: $\pr_{\Y^{\lambda}_{\!P}}(\bY) =  (r_1{\yy}_1/\|{\yy}_1\|,\,\ldots,\,r_m{\yy}_m/\|{\yy}_m\| )$, where $(r_1,\,\ldots,r_m)$ is the projection of $(\|\yy_1\|,\,\ldots,\,\|\yy_m\|)$ onto $\{\uu\in\RR^m\,|\,P_*(\uu)\le \lambda \}$. As done in standard mirror-prox \cite{Nemirovski_MLopt2} we set the stepsize parameters $\gamma_x,\,\gamma_y,\,\gamma_z$ based on the diameters of the sets $\X,\Z,\Y^{\lambda}_{\!P}$ and the Lipschitz constant of $F$.

\subsection{Proof of Proposition \ref{prop:MPICS}}

From \cite{Nemirovski_MLopt2} we have that iteration complexity of the Mirror Prox-a (MPa) Algorithm is $O( {1}/{\eps})$. Note that our \MPICS~algorithm is nothing but standard MPa algorithm applied to problem \re{p3}. As \re{p3} is the saddle-point version of the primal problem \re{p1}, \MPICS~ãlgorithm will produce an $\eps$-optimal solution of \re{p1} in $O( {1}/{\eps})$ iterations. Now apply Proposition \ref{prop:exact_penalty} which ensures that $\eps$-optimal solution of \re{p1} is an $\eps$-optimal $\eps$-feasible solution of the original problem \re{p0}.

\subsection{Dealing with unknown Regularity Constant}	
	We note from Proposition~\ref{prop:exact_penalty} that setting $\lambda \ge 2 \Upsilon_{\!\!P}\lip_f$ ensures that an $\eps$-optimal solution of \re{p1} is also an $\eps$-optimal $\eps$-feasible solution of the original problem \re{p0}. Here we state an algorithmic strategy to deal with the case when the regularity constant $\Upsilon_{\!\!P}$ is not known. Proposed methods find an $\eps$-optimal $\eps$-feasible solution of \re{p0} by solving \re{p1}. We fix an $\eps>0$ and consider a first-order method $M$ for obtaining an $\eps$-optimal solution of \re{p1}. Let the number of iterations required be $T_{\lambda} = C{\lambda^a}/{\eps^b},$ where $C,a,b$ are positive constants. When the regularity constant $\Upsilon_{\!\!P}$ is not known we start with $\lambda = \lambda_0$ for some $\lambda_0 >0$ and run $T_{\lambda_0}$ iterations of $M$. If the output after $T_{\lambda_0}$ iterations satisfy the $\eps$-feasibility then we are done; otherwise we double the value of $\lambda$ and run $T_{\lambda}$ iterations of $M$ with the new value of $\lambda$. If we proceed in this way at one point we will have $\lambda \ge 2 \Upsilon_{\!\!P}\lip_f$ and the algorithm will stop with an $\eps$-optimal $\eps$-feasible solution of \re{p0}. It is easy to see that the total number of iterations required to finally stop is only a constant factor times the number of iterations required if the regularity constant was known. Hence, we achieve the same complexity of $O(1/\eps^b)$.

\end{document}